\newtheorem{thm}{Theorem}[section]
\newtheorem{lem}[thm]{Lemma}
\newtheorem{prop}[thm]{Proposition}
\newtheorem{defn}[thm]{Definition}
\newtheorem{example}[thm]{Example}
\newtheorem{remark}[thm]{Remark}
\newtheorem{conj}[thm]{Conjecture}
\numberwithin{equation}{section}
\def\bZ{\mathbb{Z}}
\def\bR{\mathbb{R}}
\def\bC{\mathbb{C}}
\def\bP{\mathbb{P}}
\subjclass[2010]{53D37, 14J33 (primary) and 53D40, 53D45, 14N35 (secondary)}
\keywords{Mirror symmetry, SYZ conjecture, quasimap, Lagrangian Floer theory, Calabi-Yau manifold, toric variety}
\begin{document}

\title[Quasimap SYZ for toric CY]{Quasimap SYZ for toric Calabi-Yau manifolds}
\author[Kwokwai Chan]{Kwokwai Chan}
\address{Department of Mathematics\\ The Chinese University of Hong Kong\\ Shatin\\ Hong Kong}
\email{kwchan@math.cuhk.edu.hk}

\date{\today}

\begin{abstract}
In this note, we study the SYZ mirror construction for a toric Calabi-Yau manifold using instanton corrections coming from Woodward's quasimap Floer theory \cite{Woodward11} instead of Fukaya-Oh-Ohta-Ono's Lagrangian Floer theory \cite{FOOO-book, FOOO-toricI, FOOO-toricII, FOOO-toricIII}. We show that the resulting SYZ mirror coincides with the one written down via physical means \cite{Leung-Vafa98, Hori-Vafa00, HIV00} (as expected).
\end{abstract}

\maketitle


\section{Introduction}

The famous SYZ conjecture, proposed by Strominger, Yau and Zaslow \cite{SYZ96} in 1996, claims that mirror symmetry can be explained as a duality between Lagrangian torus fibrations. This suggests a nice geometric construction of the mirror for a given Calabi-Yau manifold $X$, namely, a mirror $\check{X}$ is given by the total space of the fiberwise dual of a Lagrangian torus fibration $\rho: X \to B$ on $X$. However, this construction cannot be right in general because usually a Lagrangian torus fibration admits singular fibers (which account for instanton corrections that make mirror symmetry interesting and powerful in applications to enumerative problems).

So we can only perform the duality over the smooth fibers. Since there is an integral affine structure (with singularities) on the base $B$ induced from $\rho: X \to B$, we have a natural complex structure $\check{J}_0$ on the total space of the dual fibration. But $\check{J}_0$ is defined only on an open dense subset of the mirror (as we have removed the singular fibers) and it cannot be extended any further due to nontrivial monodromy of the integral affine structure around the discriminant locus. Here comes the most important idea in the SYZ proposal \cite{SYZ96}: one has to modify the mirror complex structure (so-called semi-flat complex structure) by {\em instanton corrections} coming from holomorphic disks in $X$ with boundaries on smooth Lagrangian torus fibers of $\rho: X \to B$.

In terms of Lagrangian Floer theory, this means that the mirror $\check{X}$ should be given as a moduli space of pairs $(L,\nabla)$ consisting of a Lagrangian torus fiber $L$ and a flat $U(1)$ connection $\nabla$ on $L$, where the equivalence relation is given by isomorphisms in the Fukaya $A_\infty$ category of $X$ instead of just Hamiltonian isotopies \cite{FOOO-book, Auroux07, Auroux09, AAK12}. From this viewpoint, when the target manifold is a symplectic quotient, one can as well try to construct the mirror as a moduli space of pairs $(L,\nabla)$ where the equivalence relation is now given by isomorphisms in Woodward's quasimap $A_\infty$ category \cite{Woodward11}. Indeed the modified gluing given by the corresponding wall-crossing formulas would still cancel the nontrivial monodromy of the semi-flat complex structure around the discriminant locus. We call the resulting mirror the {\em quasimap SYZ mirror} for $X$.

In physical terms, such a mirror is precisely what one would get by applying duality on gauged linear sigma models (GLSMs). Thus it is natural to expect that the quasimap SYZ mirrors would coincide with the ones written down by physicists \cite{Leung-Vafa98, Hori-Vafa00, HIV00}, and it would differ from the original SYZ mirror by a mirror transformation, or equivalently, the {\em quantum Kirwan map} \cite{Ziltener14, Woodward_quantumKirwan_I, Woodward_quantumKirwan_II, Woodward_quantumKirwan_III}. We are going to see that this is indeed the case when $X$ is a toric Calabi-Yau manifold.

\section{Woodward's quasimap Floer theory}

Let $G$ be a compact connected Lie group and $\mathfrak{g}$ be its Lie algebra. Let $(\tilde{X}, \tilde{\omega})$ be a Hamiltonian $G$-manifold of real dimension $2m$ with moment map $\mu: \tilde{X} \to \mathfrak{g}^*$. We assume that $(\tilde{X}, \tilde{\omega})$ is {\em aspherical}, meaning that $\int_{S^2} \varphi^*\tilde{\omega} = 0$ for any smooth map $\varphi: S^2 \to \tilde{X}$. Suppose that $G$ acts freely on the level set $\mu^{-1}(0)$ so that the symplectic quotient
$$X := \tilde{X} \sslash G = \mu^{-1}(0) / G$$
is a smooth symplectic manifold equipped with the reduced symplectic structure $\omega := \tilde{\omega}_\text{red}$; more generally, one can relax this condition a bit by assuming that the action has finite stabilizers so that $X$ is a symplectic orbifold.

Let $L \subset X$ be an embedded compact Lagrangian submanifold. Then its preimage $\tilde{L} = \mu^{-1}(L)$ is a {\em $G$-Lagrangian} in $\tilde{X}$, i.e. an embedded $G$-invariant Lagrangian submanifold contained in $\mu^{-1}(0)$. We equip $L$ with a {\em brane structure}, i.e. a $G$-equivariant spin structure and a flat $U(1)$-connection $\nabla$ on $L$, where the gauge equivalence class of $\nabla$ is determined by its holonomy $\exp 2\pi \langle b, \cdot \rangle \in \text{Hom}(H_1(L), U(1)) \cong H^1(L; \bR)/H^1(L, \bZ)$.\footnote{Strictly speaking, due to convergence issues, one should use Novikov coefficients $\Lambda$ instead of complex coefficients, but this technicality will be ignored in this note for simplicity in our exposition and because convergence is not a problem in all our examples.}

In \cite{Woodward11} (see also \cite{Woodward11_corrected}), Woodward developed a {\em quasimap Floer theory} as the zero-area limit of Frauenfelder's {\em gauged Lagrangian Floer theory} \cite{Frauenfelder-PhDthesis, Frauenfelder04}. The latter theory counts pairs $(A, u)$ consisting of a connection $A \in \Omega^1(\Sigma, \mathfrak{g})$ on an open Riemann surface $\Sigma$ and a map $u: \Sigma \to \tilde{X}$ satisfying the {\em vortex equations}
\begin{equation*}
\bar{\partial}_A u = 0,\quad F_A + u^*\mu \text{Vol}_\epsilon = 0,
\end{equation*}
and Lagrangian boundary conditions; here $\text{Vol}_\epsilon = \epsilon \text{Vol}_0$ is a multiple of a fixed area form $\text{Vol}_0$ on $\Sigma$. This can be regarded as an open-string counterpart of the {\em symplectic vortex equations} \cite{CGMS02, Gaio-Salamon05}.

Woodward \cite{Woodward11} observed that the zero-area limit $\epsilon \to 0$ of gauged Lagrangian Floer theory defines a cohomology theory which gives an obstruction to displaceability of Lagrangian submanifolds in the symplectic quotient $X$ that is much more computable than the ordinary Lagrangian Floer cohomology \cite{FOOO-book}. This was applied successfully to the displaceability problem for manifolds and even orbifolds, especially in the toric case \cite{Wilson-Woodward13}. We will not go into the details of Woodward's theory here; instead we refer the interested readers to the original papers \cite{Woodward11, Woodward11_corrected} for details and to e.g. \cite[Appendix A]{Wu-Xu15} for an overview. In the following we will only recall the results we need.

Given a $G$-Lagrangian $\tilde{L}$ equipped with a brane structure $b$, Woodward constructed an $A_\infty$ algebra $QA_\infty(\tilde{L},b)$, called the {\em quasimap $A_\infty$ algebra}, using so-called {\em quasi-disks}:\footnote{In order to define the $A_\infty$ structure, one in fact needs to consider moduli spaces of {\em holomorphic treed quasidisks} which are configurations consisting of gradient flow lines (after choosing a Morse function and a Riemannian metric on $L$) and holomorphic disks in $\tilde{X}$ with boundary on $\tilde{L}$. But for the purpose of this note, we only need to consider holomorphic quasidisks.}
\begin{defn}\label{defn:quasidisk}
Let $\tilde{L}$ be a $G$-Lagrangian in $\tilde{X}$ and $J$ be a $G$-invariant compatible almost complex structure on $\tilde{X}$.
A {\em holomorphic quasidisk} for $L = \mu(\tilde{L})$ is a $J$-holomorphic map $u: D \to \tilde{X}$ from the unit disk $D \subset \bC$ (equipped with the standard complex structure) which maps the boundary $\partial D$ to $\tilde{L}$.
An {\em isomorphism} of quasidisks $u_j: D \to \tilde{X}$, $j = 0, 1$ consists of a biholomorphism $\varphi: D \to D$ and an element $g \in G$ such that $\varphi^* u_1 = g u_0$.
\end{defn}


The definition of quasidisk invariants in \cite{Woodward11} does not involve Kuranishi structures, and thus is considerably simpler than that of open Gromov-Witten invariants \cite{FOOO-book, FOOO-toricI, FOOO-toricII, FOOO-toricIII}. The quasidisk invariants are also much easier to compute since there are no sphere bubbling for quasidisks which are just holomorphic disks in $\tilde{X}$.

\begin{prop}[Proposition 3.7 in \cite{Woodward11}]
\label{prop:weakly_unobstr}
Let $\tilde{L}$ be a $G$-Lagrangian in $\tilde{X}$.
Suppose that $J_0$ is a $G$-invariant compatible almost complex structure on $\tilde{X}$ (satisfying a certain convexity condition \cite[Condition (H3)]{Wu-Xu15}) such that every non-constant stable $J_0$-holomorphic disk in $(\tilde{X}, \tilde{L})$ is regular and has Maslov index at least 2. Then the $A_\infty$ algebra $QA_\infty(\tilde{L},b)$ is weakly unobstructed, i.e. the {\em central charge} $\mathfrak{m}_0(1)$ of $QA_\infty(\tilde{L},b)$ is a multiple of $\mathbf{1}_L \in H^0(L; \bC)$ for any brane structure $b$ on $L$ and is given by the following formula
\begin{equation*}
\mathfrak{m}^b_0(1) = \sum_{[u]: I(u) = 2} q^{-\int_{D} u^*\tilde{\omega}} e^{2\pi \langle b, \partial u\rangle} \textbf{1}_L,
\end{equation*}
where $I(u)$ is the Maslov index of $u$ so that the sum is over isomorphism classes of all Maslov index 2 quasidisks.
\end{prop}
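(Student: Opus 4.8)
The plan is to follow the now-standard argument of Cho--Oh and Fukaya--Oh--Ohta--Ono for (semi-)Fano toric manifolds, transported to Woodward's quasidisks; the point of the hypothesis on $J_0$ is precisely that it lets one replace the virtual (Kuranishi) machinery by honest transversality. Unwinding the definition of the quasimap $A_\infty$ algebra, the central charge is
\begin{equation*}
\mathfrak{m}^b_0(1) \;=\; \sum_{\beta} q^{-\int_D u_\beta^*\tilde\omega}\, e^{2\pi\langle b,\partial\beta\rangle}\, (ev_0)_*\big[\overline{\mathcal{M}}_1(\tilde L;\beta)\big],
\end{equation*}
where the sum runs over homotopy classes $\beta$ of quasidisks, $\overline{\mathcal{M}}_1(\tilde L;\beta)$ is the compactified moduli space of holomorphic treed quasidisks in class $\beta$ with one boundary output leaf and no inputs, and $ev_0\colon \overline{\mathcal{M}}_1(\tilde L;\beta)\to L$ is evaluation at the output. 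Constant quasidisks do not contribute (the relevant configuration is unstable), so everything reduces to analysing these moduli spaces.

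First I would record the consequences of the hypotheses. Asphericity of $(\tilde X,\tilde\omega)$ forbids non-constant holomorphic spheres, hence there is no sphere bubbling for quasidisks. The regularity assumption makes each $\overline{\mathcal{M}}_1(\tilde L;\beta)$ a smooth oriented manifold with corners of the expected dimension $n+\mu(\beta)-2$ (with $n=\dim_{\mathbb R} L$), with no perturbation needed; here one uses that $\tilde L=\mu^{-1}(L)\subset \mu^{-1}(0)$ is a principal $G$-bundle over $L$ and that $G$ acts freely on $\mu^{-1}(0)$, so the quotient-type dimension bookkeeping for quasidisks in $(\tilde X,\tilde L)$ reproduces exactly the count one would see for disks in $(X,L)$, with the quasimap Maslov index matching the Maslov index of the reduced disk. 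The Maslov-index hypothesis controls disk bubbling: every non-constant bubble carries Maslov index $\ge 2$, so in a broken configuration in a class $\beta$ with $\mu(\beta)=2$, every bubble is a Maslov-$2$ disk attached to a \emph{constant} main component.

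Next comes the degree count, which yields weak unobstructedness and the formula simultaneously. Pushing forward under $ev_0$ produces a differential form on $L$ of degree $n-\dim\overline{\mathcal{M}}_1(\tilde L;\beta)=2-\mu(\beta)$. Since Maslov indices of disks bounding an oriented spin Lagrangian are even and non-constant disks have $\mu\ge 2$, the only classes with non-negative output degree are those with $\mu(\beta)=2$, for which the output is a $0$-form, i.e.\ a multiple of $\mathbf 1_L$; classes with $\mu(\beta)\ge 4$ contribute a form of negative degree, hence zero. This already shows $\mathfrak{m}^b_0(1)\in\mathbb{C}\cdot\mathbf 1_L$, i.e.\ $QA_\infty(\tilde L,b)$ is weakly unobstructed. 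For the precise formula I would then check that for $\mu(\beta)=2$ the space $\overline{\mathcal{M}}_1(\tilde L;\beta)$ is in fact a closed $n$-manifold: its codimension-one strata would consist of a constant disk carrying a Maslov-$2$ bubble, but the usual gluing analysis shows such configurations are excluded (equivalently, of codimension $\ge 2$), exactly as in the Fano toric case. Hence $(ev_0)_*\big[\overline{\mathcal{M}}_1(\tilde L;\beta)\big]=\big(\deg ev_0^{\,\beta}\big)\,\mathbf 1_L$, where $\deg ev_0^{\,\beta}$ is the signed number of Maslov-$2$ quasidisks in class $\beta$ through a generic point of $L$; organising the sum by isomorphism classes of Maslov-index-$2$ quasidisks and reinserting the area and holonomy weights gives the displayed formula.

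The main obstacle — really the only substantive point — is establishing, from the stated regularity and Maslov hypotheses alone, that these moduli spaces of treed quasidisks are compact smooth oriented manifolds of the expected dimension with the expected boundary behaviour (in particular that the $\mu(\beta)=2$ spaces have no genuine boundary), together with a coherent choice of orientations making $\deg ev_0^{\,\beta}$ a well-defined signed count. This is where Woodward's transversality and compactness package for quasidisks enters, running in parallel with the Cho--Oh/FOOO treatment of the Fano toric case; the simplification relative to general open Gromov--Witten theory is precisely that asphericity kills sphere bubbles and regularity removes Kuranishi structures, so no virtual perturbation is involved.
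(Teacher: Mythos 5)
The paper gives no proof of this statement---it is imported verbatim as Proposition 3.7 of Woodward's paper \cite{Woodward11}---and your sketch reproduces the standard argument given there (and in the Cho--Oh/FOOO template it follows): asphericity kills sphere bubbles, regularity removes the need for virtual perturbations, the dimension count $\dim\overline{\mathcal{M}}_1(\tilde L;\beta)=n+\mu(\beta)-2$ forces only Maslov-index-$2$ classes to contribute a $0$-form, and the absence of codimension-one boundary for $\mu(\beta)=2$ makes the count a well-defined degree. Your proposal is correct and takes essentially the same approach as the cited source, with the remaining technical input (compactness, orientations, and transversality for treed quasidisks) correctly identified and appropriately deferred to Woodward's package.
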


When $QA_\infty(\tilde{L},b)$ is weakly unobstructed, we call
\begin{equation*}
W^\text{QF}(L,b) := \sum_{[u]: I(u) = 2} q^{-\int_{D} u^*\tilde{\omega}} e^{2\pi \langle b, \partial u\rangle}
\end{equation*}
the {\em quasimap Floer superpotential} for $(L,b)$.
In this case, $\mathfrak{m}^b_1 \circ \mathfrak{m}^b_1 = 0$, so that the quasimap Floer cohomology $HQF(\tilde{L},b)$ is well-defined. The cohomology vanishes if $L$ is displaceable but is non-vanishing if $(L,b)$ is a critical point of the function $W^\text{QF}$, thus giving rise to an obstruction to the non-displaceability of $L$ in $X$.

From now on, we will restrict ourselves to the toric case, and we shall recall Woodward's computation in this case. We take $\tilde{X} = \bC^m$ equipped with the standard symplectic structure $\omega_0$. Consider the diagonal action of $T^m$ on $\bC^m$. Let $G \subset T^m$ be a subtorus with moment map $\mu: \tilde{X} \to \mathfrak{g}^*$.
We assume that $G$ acts on $\mu^{-1}(0)$ freely so that the quotient
$$X = \mu^{-1}(0)/G$$
is a toric manifold, equipped with the residual action of $T := T^m / G$ and moment map
$$\phi: X \to \mathfrak{t}^*.$$

The moment map image is given by a convex polyhedron
\begin{equation*}
\Delta := \phi(X) = \{ \mathbf{x} \in \mathfrak{t}^* \mid \ell_i(\mathbf{x}) \geq 0 \},
\end{equation*}
where for each $i = 1, \ldots, m$, $\ell_i(\mathbf{x}) := 2\pi \left( \langle \mathbf{x}, v_i \rangle - \lambda_i \right)$ is the defining linear function for a facet of $\Delta$, the lattice vector $v_i \in N := \bZ^n$ is the (inward) normal to the facet and $\lambda_i \in \bR$ is a constant. We will identify $\mathfrak{t}$ with $N \otimes_\bZ \bR$ and $\mathfrak{t}^*$ with $M \otimes_\bZ \bR$ as vector spaces, where $M = N^\vee = \text{Hom}(N, \bZ)$ is the dual lattice.

For each $\mathbf{x} \in \text{Int}(\Delta)$, the moment map fiber $L_\mathbf{x} := \phi^{-1}(\mathbf{x}) \subset X$ is a Lagrangian torus, whose preimage $\tilde{L}_\mathbf{x} \subset \bC^m$ is a standard torus
\begin{equation*}
\tilde{L}_\mathbf{x} = \{ (X_1, \ldots, X_m) \in \bC^m \mid |X_i|^2 = \ell_i(\mathbf{x})/2\pi \text{ for $i = 1, \ldots, m$} \}.
\end{equation*}
Let $J_0$ be the standard complex structure on $\bC^m$. Then we have the following results (as special cases of the main results in Cho-Oh \cite{Cho-Oh06}).

\begin{prop}[Proposition 6.1 and Corollary 6.2 in \cite{Woodward11}]
Any holomorphic quasidisk in $\tilde{X} = \bC^m$ with boundary in $\tilde{L}_\mathbf{x}$ is given by a Blaschke product
\begin{equation*}
u(z) = \left( \sqrt{\frac{\ell_i(\mathbf{x})}{2\pi}} \prod_{k=1}^{d_i} \frac{z - \alpha_{i,k}}{1 - \overline{\alpha_{i,k}}z} \right)_{i=1}^m.
\end{equation*}
Also, every stable $J_0$-holomorphic disk is regular.
\end{prop}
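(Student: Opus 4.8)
The plan is to exploit the product structure: $(\tilde X, J_0) = (\bC^m, J_0)$ is a product of $m$ copies of $\bC$ with the standard complex structure, so a $J_0$-holomorphic quasidisk is simply an $m$-tuple $u = (u_1, \dots, u_m)$ of holomorphic maps $u_i \colon D \to \bC$, and the Lagrangian boundary condition $u(\partial D) \subset \tilde L_\mathbf{x} = \prod_{i=1}^m \{\, |X_i| = r_i \,\}$, with $r_i := \sqrt{\ell_i(\mathbf{x})/2\pi} > 0$ for $\mathbf{x} \in \mathrm{Int}(\Delta)$, decouples into the $m$ separate conditions $|u_i(z)| \equiv r_i$ on $\partial D$. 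Thus the statement reduces to two classical, one-variable facts, applied factor by factor: (a) classifying holomorphic maps $D \to \bC$ sending $\partial D$ to a circle of fixed radius, and (b) checking regularity of the Cauchy--Riemann operator on each factor. This reproves the special cases of Cho--Oh \cite{Cho-Oh06} that are needed here, and one could alternatively just cite \cite{Cho-Oh06} directly.

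For (a), I would assume $u_i \not\equiv 0$ (a vanishing factor would force $r_i = 0$, excluded for $\mathbf{x} \in \mathrm{Int}(\Delta)$). The zeros of $u_i$ cannot accumulate on $\partial D$, where $|u_i| \to r_i > 0$; being confined to a compact subset of the open disk, they are finite in number, say $\alpha_{i,1}, \dots, \alpha_{i,d_i}$ listed with multiplicity. The finite Blaschke product $B_i(z) = \prod_{k=1}^{d_i}\frac{z - \alpha_{i,k}}{1 - \overline{\alpha_{i,k}} z}$ has exactly these zeros and satisfies $|B_i| \equiv 1$ on $\partial D$, so $u_i/B_i$ extends to a nowhere-zero holomorphic function on $D$, continuous up to the boundary with constant modulus $r_i$ there. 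Applying the maximum modulus principle to $u_i/B_i$ and to its reciprocal forces $u_i/B_i$ to be a constant $c_i$ with $|c_i| = r_i$; hence $u_i = c_i B_i$, which is the asserted Blaschke form, the statement recording the representative normalized to $c_i = \sqrt{\ell_i(\mathbf{x})/2\pi} > 0$ (the remaining phase being part of the quasidisk's moduli). Counting the winding of $u_i|_{\partial D}$ about $0 \in \bC$ then gives the Maslov index $I(u) = 2 \sum_{i=1}^m d_i$, so the Maslov-two quasidisks, which are the input to Proposition~\ref{prop:weakly_unobstr} and the superpotential formula, are exactly those with $\sum_i d_i = 1$.

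For (b), I would observe that the real-linearized Cauchy--Riemann operator $D_u\bar\partial$ at such a $u$ again splits as a direct sum over $i$ of Cauchy--Riemann operators on the bundle pair $(\,u_i^* T\bC,\ (u_i|_{\partial D})^* T\tilde L_\mathbf{x}\,)$ over $(D, \partial D)$. Each summand is the trivial line bundle $D \times \bC$ carrying a totally real line subbundle along $\partial D$ whose Maslov number is the winding number $2 d_i \geq 0$ of the boundary loop $u_i|_{\partial D}$ about the origin. By the standard surjectivity criterion for Cauchy--Riemann operators on line-bundle pairs over the disk (Riemann--Roch: such an operator has Fredholm index equal to the Maslov number plus one, and is surjective as soon as that number is $\geq -1$, with kernel of dimension $2d_i + 1$), each summand is surjective; hence so is $D_u\bar\partial$. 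Finally, since $(\bC^m, \omega_0)$ is aspherical there are no non-constant $J_0$-holomorphic spheres, so a \emph{stable} $J_0$-holomorphic disk has all of its sphere components constant and all of its disk components of the Blaschke type above; each component operator being surjective, and the boundary-node evaluation maps being submersive thanks to the ample kernels just computed, the whole stable map is regular.

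I expect the only delicate point to be bookkeeping rather than a genuine obstacle: correctly identifying the totally real boundary condition induced by $\tilde L_\mathbf{x}$ in the $i$-th factor with a Maslov-$2d_i$ line-bundle pair on the disk, so that the textbook automatic-transversality result applies factorwise, and keeping careful track of the unimodular constants (and the marked point) as the moduli of the quasidisk. Everything else — finiteness of the zeros, the maximum modulus argument, and asphericity ruling out sphere bubbling — is routine.
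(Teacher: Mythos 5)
Your argument is correct and is precisely the standard Cho--Oh classification-plus-automatic-transversality proof that the paper itself does not reproduce but simply cites (Proposition 6.1 and Corollary 6.2 of \cite{Woodward11}, which are special cases of \cite{Cho-Oh06}): splitting into factors, the maximum-modulus/Blaschke argument, surjectivity of the Cauchy--Riemann operator on line-bundle pairs of nonnegative Maslov index, and the induction over components of a stable disk with submersive evaluation maps as in \cite[Theorem 11.1]{FOOO-toricI}. Your remark that the displayed formula suppresses the unimodular constants $c_i/|c_i|$ (only an $r$-dimensional torus of which is absorbed by the $G$-action) is also accurate and worth keeping.
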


Furthermore, all quasidisks have Maslov indices at least 2. Combining with Proposition \ref{prop:weakly_unobstr}, we have
\begin{prop}[Corollary 6.4 in \cite{Woodward11}]
The quasimap $A_\infty$ algebra $QA_\infty(\tilde{L}_\mathbf{x},b)$ is weakly unobstructed for any $b \in H^1(\tilde{L}_\mathbf{x}; \bC)$, and the quasimap Floer superpotential $W^\text{QF}: H^1(\tilde{L}_\mathbf{x}; \bC) / H^1(\tilde{L}_\mathbf{x}; \bZ) \to \bC$ is given by
\begin{equation*}
W^\text{QF}(b) = \sum_{i=1}^m e^{2\pi \langle b, v_i\rangle} q^{-\ell_i(\mathbf{x})}.
\end{equation*}
\end{prop}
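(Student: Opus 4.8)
The plan is to read the statement off Proposition~\ref{prop:weakly_unobstr} together with the Blaschke classification in the previous Proposition, the enumeration and the Maslov/area bookkeeping being the Cho--Oh computation \cite{Cho-Oh06} specialized to $\tilde X=\bC^m$ and $\tilde L_\mathbf{x}$. First I would verify that the hypotheses of Proposition~\ref{prop:weakly_unobstr} hold for the standard complex structure $J_0$: the linear target $\bC^m$ satisfies the required convexity condition (\cite[Condition~(H3)]{Wu-Xu15}), every stable $J_0$-holomorphic disk with boundary in $\tilde L_\mathbf{x}$ is regular (previous Proposition), and all such disks have Maslov index at least $2$ (noted just above). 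Proposition~\ref{prop:weakly_unobstr} then immediately gives that $QA_\infty(\tilde L_\mathbf{x},b)$ is weakly unobstructed for every brane structure $b$ and that
\begin{equation*}
W^\text{QF}(b)=\sum_{[u]:\,I(u)=2} q^{-\int_D u^*\tilde\omega}\,e^{2\pi\langle b,\partial u\rangle},
\end{equation*}
so the task reduces to listing the Maslov-index-$2$ quasidisks and computing, for each, its symplectic area and its boundary homology class.

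For the enumeration I would apply the classification of the previous Proposition: a holomorphic quasidisk is a product $u=(u_1,\dots,u_m)$ with $u_i$ a Blaschke factor of degree $d_i$, and the Cho--Oh index formula gives $I(u)=2\sum_{i=1}^m d_i$, since the $i$-th coordinate circle of $\tilde L_\mathbf{x}$ is covered $d_i$ times by $u_i$ and contributes $2d_i$ to the Maslov index. Hence $I(u)=2$ forces exactly one $d_i$ to equal $1$ and all others to vanish, so the Maslov-$2$ classes are precisely the $m$ \emph{basic disks} $\beta_1,\dots,\beta_m$, one for each facet of $\Delta$; in $\beta_i$ only the $i$-th component is non-constant and is a single Blaschke factor. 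By the transversality statement and the degree computation of Cho--Oh, the moduli space of such disks carrying one boundary marked point is regular and its evaluation map has degree one, so each $\beta_i$ contributes exactly once to $\mathfrak{m}^b_0(1)$, in agreement with the displayed sum.

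It remains to evaluate the two weights on $\beta_i$. Since only $u_i$ varies and a degree-one Blaschke factor maps $D$ biholomorphically onto the disk of radius $\sqrt{\ell_i(\mathbf{x})/2\pi}$ in the $i$-th coordinate factor of $\bC^m$, a direct integration gives $\int_D\beta_i^*\tilde\omega=\ell_i(\mathbf{x})$ with the conventions fixed above; in particular the area is independent of the Blaschke parameter, as it must be on a homotopy class. For the holonomy factor, $\partial\beta_i$ runs once around the $i$-th circle of $\tilde L_\mathbf{x}$, i.e. represents the $i$-th generator of $H_1(\tilde L_\mathbf{x};\bZ)\cong\bZ^m$; under the projection $\tilde L_\mathbf{x}\to L_\mathbf{x}$, which on first homology is the quotient $\bZ^m\to N$, $e_i\mapsto v_i$, its class becomes $v_i\in N\cong H_1(L_\mathbf{x};\bZ)$, so the holonomy of $\nabla$ along $\partial\beta_i$ is $\exp 2\pi\langle b,v_i\rangle$. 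Plugging $\int_D\beta_i^*\tilde\omega=\ell_i(\mathbf{x})$ and $e^{2\pi\langle b,\partial\beta_i\rangle}=e^{2\pi\langle b,v_i\rangle}$ into the displayed sum yields $W^\text{QF}(b)=\sum_{i=1}^m e^{2\pi\langle b,v_i\rangle}q^{-\ell_i(\mathbf{x})}$. Everything non-formal here --- the Blaschke classification, regularity, the index formula, and the degree-one evaluation map --- has already been recalled above, so the only point needing care is matching the normalization of $\tilde\omega$ (equivalently, of the energy variable $q$) so that the basic disk $\beta_i$ comes out with area exactly $\ell_i(\mathbf{x})$ rather than a constant multiple of it.
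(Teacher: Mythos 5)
Your proposal is correct and follows essentially the same route as the paper, which simply combines Proposition~\ref{prop:weakly_unobstr} with the Blaschke-product classification (Woodward's Corollary 6.2, i.e.\ Cho--Oh) and the observation that all quasidisks have Maslov index at least $2$; the Maslov-index-$2$ disks are the $m$ basic disks, whose areas $\ell_i(\mathbf{x})$ and boundary classes $v_i$ give the displayed formula. Your closing caveat about matching the normalization of $\tilde\omega$ so that the basic disk has area exactly $\ell_i(\mathbf{x})$ (rather than a constant multiple) is exactly the right point to flag, and is resolved by adopting Cho--Oh's area formula as the paper does.
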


The function $W^\text{QF}$ coincides with the {\em Givental-Hori-Vafa superpotential} \cite{Givental98, Hori-Vafa00} for the toric manifold $X$.

\section{Quasimap SYZ construction}

\subsection{Toric Calabi-Yau manifolds and their physical mirrors}

We now let $X$ be a toric Calabi-Yau manifold of complex dimension $n$; here by {\em Calabi-Yau} we mean that the canonical line bundle $K_X$ is trivial. Recall that the lattice vectors $v_1, \ldots, v_m \in N$ are in a one-to-one correspondence with the toric prime divisors $D_1, \ldots, D_m \subset X$ respectively, and the canonical divisor of $X$ is given by $-\sum_{i=1}^m D_i$. So $X$ is Calabi-Yau if and only if there exists a lattice vector $u \in M$ such that $\langle u, v_i\rangle = 1$ for $i = 1, \ldots, m$ \cite{CLS_toric_book}.
Alternatively, this is equivalent to the existence of $u\in M$ such that the corresponding character $\chi^u \in \text{Hom}(M \otimes_\bZ \bC^\times, \bC^\times)$ defines a {\em holomorphic} function on $X$ with simple zeros exactly along each of the toric prime divisors $D_i$'s and non-vanishing elsewhere. Note that $X$ is necessarily noncompact in this case.

By choosing a suitable basis of $N \cong \bZ^n$, we may write
$$v_i = (w_i, 1) \in N = \bZ^{n-1} \oplus \bZ,$$
where $w_i \in \bZ^{n-1}$ and $w_m = 0 \in \bZ^{n-1}$. We will also assume that $X$ is {\em semi-projective}, meaning that the natural map $\phi: X \to \text{Spec}(H^0(\mathcal{O}_X))$ is projective; combinatorially this is equivalent to convexity of the support of the fan $\Sigma$ of $X$ \cite[p.332]{CLS_toric_book}. In this case, the toric Calabi-Yau manifold $X$ is a crepant resolution of an affine toric variety (defined by the cone $|\Sigma|$) with Gorenstein canonical singularities; also, $X$ can be presented as a symplectic quotient
$$X = \mu^{-1}(0)/G,$$
where $G \subset T^m$ is a subtorus of dimension $r := m - n$.

An important class of examples of toric Calabi-Yau manifolds is given by total spaces of the canonical line bundles $K_Y$ over compact toric manifolds $Y$.
For example, the total space of $K_{\bP^1}=\mathcal{O}_{\bP^1}(-2)$ is a toric Calabi-Yau surface whose fan $\Sigma$ has rays spanned by the lattice vectors
$$v_1=(1,1), v_2=(0,1), v_2=(-1,1,1), v_3=(-1,1)\in N = \bZ^2.$$
Another example is given by the total space of $K_{\bP^2}=\mathcal{O}_{\bP^2}(-3)$, which is a toric Calabi-Yau 3-fold whose fan $\Sigma$ has rays spanned by the lattice vectors
$$v_1=(1,0,1), v_2=(0,1,1), v_3=(-1,-1,1), v_4=(0,0,1)\in N = \bZ^3.$$

Mirror symmetry in this setting is known as {\em local mirror symmetry} because it originated from an application of mirror symmetry techniques to Fano surfaces (e.g $\bP^2$) lying inside compact Calabi-Yau manifolds and could be derived via physical arguments from mirror symmetry for compact Calabi-Yau hypersurfaces in toric varieties by taking certain limits in the complexified K\"ahler and complex moduli spaces \cite{KKV97}.

The mirror of a toric Calabi-Yau manifold $X$ is predicted to be a family of affine hypersurfaces in $\bC^2 \times (\bC^\times)^{n-1}$ \cite{Leung-Vafa98, CKYZ99, Hori-Vafa00, HIV00} explicitly written as
\begin{equation}\label{eqn:toricCY_mirror}
\check{X}_t = \left\{ (u, v, z_1, \ldots, z_{n-1}) \in \bC^2 \times (\bC^\times)^{n-1} \mid uv = \sum_{i=1}^{m} \check{C}_i z^{w_i} \right\},
\end{equation}
where the coefficients $\check{C}_i \in \bC$ are constants (without loss of generality, we will set $\check{C}_m = 1$) subject to the constraints
$$t_a = \prod_{i=1}^m \check{C}_i^{D_i\cdot \gamma_a},\quad a=1,\ldots,r;$$
here $t = (t_1, \ldots, t_r)$ are coordinates on the mirror complex moduli $\check{\mathcal{M}}_B := \mathbb{K}^\vee \otimes_{\bZ} \bC^\times \cong (\bC^\times)^r$. $\check{X}_t$ is Calabi-Yau since
\begin{equation*}
\check{\Omega}_t := \text{Res}\left[ \frac{du \wedge dv \wedge d\log z_1 \wedge \cdots \wedge d\log z_{n-1}}{uv - \sum_{i=1}^{m} \check{C}_i z^{w_i}} \right]
\end{equation*}
is a nowhere vanishing holomorphic volume form on $\check{X}_t$.

The mirror of $X = K_{\bP^1}$ is given by
\begin{align*}
\check{X}_t = \left\{ (u,v,z) \in \bC^2 \times \bC^\times \mid uv = 1 + z + \frac{t}{z} \right\},
\end{align*}
while the mirror of $X = K_{\bP^2}$ is given by
\begin{align*}
\check{X}_t = \left\{ (u,v,z_1,z_2) \in \bC^2 \times (\bC^\times)^2 \mid uv = 1 + z_1 + z_2 + \frac{t}{z_1z_2} \right\};
\end{align*}
here $t$ is a coordinate on the mirror complex moduli $\check{\mathcal{M}}_B \cong \bC^\times$ in both examples.

\subsection{Constructing mirrors by SYZ}

As we have mentioned in the introduction, the SYZ proposal \cite{SYZ96} suggests a way to construct a mirror (as a complex manifold) for a given Calabi-Yau manifold $X$ (regarded as a symplectic manifold), namely, by fiberwise dualizing a Lagrangian torus fibration $\rho:X \to B$ on $(X,\omega)$. In general this does not give the correct mirror due to the existence of singular fibers in $\rho$, which prevent the natural semi-flat complex structure $\check{J}_0$ on the total space of the dual fibration from extending across the singular fibers; another way to formulate this problem is to say that $\check{J}_0$ has nontrivial monodromy around the discriminant locus coming from nontrivial monodromy of the integral affine structure on the smooth locus $B_0 \subset B$.

In the original SYZ paper \cite{SYZ96}, it was suggested that the genuine complex structure should be given by a deformation of $\check{J}_0$ by nontrivial instanton corrections coming from holomorphic disk counting invariants. In the toric Calabi-Yau case, such instanton corrections are given precisely by the genus 0 {\em open Gromov-Witten invariants}. The key observation in this note is that, for the purpose of just cancelling the nontrivial monodromy, one can as well use quasidisk invariants instead of genus 0 open Gromov-Witten invariants. This leads to the mirror construction described as follows.

We start with a Lagrangian torus fibration
$$\rho: X \to B$$
on $(X, \omega)$. We assume that this fibration comes from the fiberwise quotient of a fibration on $\tilde{X}$; more precisely, we assume that there exists a $G$-Lagrangian torus fibration
$$\tilde{\rho}: \tilde{X} \to \tilde{B}$$
on $(\tilde{X}, \tilde{\omega})$ such that
\begin{itemize}
\item[$\bullet$]
$B$ sits inside $\tilde{B}$ as an affine submanifold, and
\item[$\bullet$]
$\rho$ is given by the fiberwise quotient of the restriction of $\tilde{\rho}$ to $\mu^{-1}(0)$ by $G$.
\end{itemize}
The {\em quasimap SYZ mirror construction} then proceeds as follows (mimicking the ordinary SYZ mirror construction \cite{Auroux07, Auroux09, AAK12, CLL12, CCLT13}):
\begin{itemize}
\item[Step 1]
Over the smooth locus $B_0 := B \setminus \left( \partial B \cup \Gamma \right)$, the pre-image $X_0 := \rho^{-1}(B_0)$ can be identified with the quotient $T^*B_0/\Lambda^\vee$ by Duistermaat's action-angle coordinates \cite{Duistermaat80}.

\item[Step 2]
Define the {\em semi-flat} mirror $\check{X}_0$ as $TB_0/\Lambda$, which is not the correct mirror because the natural semi-flat complex structure $\check{J}_0$ on $\check{X}_0$ {\em cannot} be extended further to {\em any} (partial) compactification of $\check{X}_0$ due to nontrivial monodromy of the integral affine structure on $B_0$ around the discriminant locus $\Gamma$.

\item[Step 3]
Obtain the correct and (partially) compactified mirror $\check{X} \supset \check{X}_0$ by modifying the gluing of complex charts in $\check{X}_0$ using the wall-crossing formulas for the counting of {\em quasi-disks} bounded by fibers of $\rho$ (or more correctly, their lifts as fibers of $\tilde{\rho}$).
\end{itemize}

In the case of the usual SYZ construction where one uses Lagrangian Floer theory, such a procedure was first pioneered by Auroux \cite{Auroux07, Auroux09} where he treated the first nontrivial example of toric Calabi-Yau manifolds, namely, for $X = \bC^n$. His results were later generalized to all (semi-projective) toric Calabi-Yau manifolds in \cite{CLL12} and orbifolds in \cite{CCLT13}, and also certain blowups of toric varieties in \cite{AAK12}.

The SYZ mirror constructed in this way can be rigorously defined as a moduli space of objects in the Fukaya $A_\infty$ category; see \cite[Appendix A]{AAK12} for a nice and detailed explanation. We expect that the quasimap SYZ mirror has a similar interpretation using Woodward's quasimap $A_\infty$ category.

\subsection{The Gross fibration}

Here we recall the construction of the Gross fibration on a toric Calabi-Yau manifold $X$.\footnote{Such fibrations were in fact first independently constructed by Gross \cite{Gross01} and Goldstein \cite{Goldstein01}, but we prefer the term ``Gross fibration'' because Gross did a detailed analysis of the discriminant loci of the fibrations and constructed such fibrations mainly for the purpose of understanding SYZ mirror symmetry.}

To begin with, recall that the lattice vector $u \in M \subset \mathfrak{t}^*$, which defines the hyperplane containing all the ray generators $v_i$'s, corresponds to a holomorphic function $\chi^u: X \to \bC$ with simple zeros along each toric prime divisor $D_i \subset X$. We equip $X$ with a toric K\"ahler structure $\omega$ and consider the action by the subtorus $T_0 \subset T \cong T^n$ which preserves $\chi^u$, or equivalently, the subtorus whose action preserves the canonical holomorphic volume form $\Omega$ on $X$. Let $\rho_0: X \to \bR^{n-1}$ be the corresponding moment map which is given by composing the $T$-moment map with the projection along the ray in $\mathfrak{t}^*$ spanned by $u$.
\begin{prop}[Goldstein \cite{Goldstein01}, Gross \cite{Gross01}]
For any nonzero constant $\epsilon \in \bC^\times$, the map defined by
\begin{align*}
\rho := \left( \rho_0, |\chi^u - \epsilon| \right): X \to B:= \bR^{n-1} \times \bR_{\geq0},
\end{align*}
is a special Lagrangian torus fibration, where the fibers are special with respect to the meromorphic volume form
$$\Omega_\epsilon := \frac{\Omega}{\chi^u - \epsilon}.$$
\end{prop}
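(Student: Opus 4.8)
The plan is to check, in order, that (i) the fibers of $\rho$ are invariant under the $(n-1)$-torus $T_0$ and are Lagrangian, (ii) they are special with respect to $\Omega_\epsilon=\Omega/(\chi^u-\epsilon)$, and (iii) away from a codimension-one discriminant locus $\Gamma\subset B$ they are smooth Lagrangian $n$-tori, with $\rho$ proper onto $B$; the genuine work is in (iii). First I would fix coordinates. Since $\langle u,v_i\rangle=1$ for all $i$, the vector $u\in M$ is primitive, so $T_0\subset T\cong T^n$ (the subtorus on which $\chi^u$ is trivial, equivalently the one preserving $\Omega$) has dimension $n-1$ and $\rho_0$ is a moment map for it; moreover $\chi^u$, and hence $g:=|\chi^u-\epsilon|$ and $\tilde g:=|\chi^u-\epsilon|^2$, is $T_0$-invariant, so every fiber of $\rho$ is $T_0$-invariant. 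Completing $u$ to a $\bZ$-basis of $M$ gives algebraic coordinates $(z_1,\ldots,z_{n-1},w)$ on the open dense orbit $U\cong(\bC^\times)^n\subset X$ with $w=\chi^u$, in which $T_0$ rotates $z_1,\ldots,z_{n-1}$ and fixes $w$, and in which the nowhere-vanishing holomorphic volume form is
\[
\Omega \;=\; \frac{dz_1}{z_1}\wedge\cdots\wedge\frac{dz_{n-1}}{z_{n-1}}\wedge dw ;
\]
for $X=\bC^n$ this is the usual $dZ_1\wedge\cdots\wedge dZ_n$, and the general case follows from uniqueness of $\Omega$ up to scale.

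For the Lagrangian property I would argue conceptually rather than in coordinates. Fix a fiber $F=\rho^{-1}(b_0,r)$. The Hamiltonian vector fields $\xi_1,\ldots,\xi_{n-1}$ of the components of $\rho_0$ generate the $T_0$-action, which preserves $\rho_0$ and $g$, hence they are tangent to $F$; the Hamiltonian vector field $X_{\tilde g}$ of $\tilde g$ (which is smooth on all of $X$ and vanishes along $\{\chi^u=\epsilon\}$) Poisson-commutes with each component of $\rho_0$ by $T_0$-invariance, so it too is tangent to $F$. Since $\{\mu_i,\mu_j\}=0$, $\{\mu_i,\tilde g\}=0$ and $\{\tilde g,\tilde g\}=0$, the form $\omega$ vanishes identically on $\text{span}\{\xi_1,\ldots,\xi_{n-1},X_{\tilde g}\}$; and on the dense open subset of $F$ where $T_0$ acts freely and $d\tilde g\notin\text{span}\{d\mu_i\}$ — the latter because $\chi^u$ is non-constant along the $(n+1)$-dimensional fibers of $\rho_0$ — these $n$ vector fields span $TF$. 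Hence $\omega|_F=0$ there, and so on all of $F$ by continuity.

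For the special condition I would compute on $U$, where $F$ is cut out by fixing $|z_1|,\ldots,|z_{n-1}|$ (a level set of the $T_0$-moment map) together with $|w-\epsilon|=r$, and is parametrized by the angles $\theta_j=\arg z_j$ and $\psi=\arg(w-\epsilon)$; using $\tfrac{dz_j}{z_j}\big|_F=i\,d\theta_j$ and $\tfrac{dw}{w-\epsilon}\big|_F=i\,d\psi$ one gets
\[
\Omega_\epsilon\big|_F \;=\; \frac{dz_1}{z_1}\wedge\cdots\wedge\frac{dz_{n-1}}{z_{n-1}}\wedge\frac{dw}{w-\epsilon}\bigg|_F \;=\; i^{\,n}\, d\theta_1\wedge\cdots\wedge d\theta_{n-1}\wedge d\psi ,
\]
which is a fixed unit complex number times a real volume form on $F$; thus $\text{Im}\bigl((-i)^n\Omega_\epsilon\bigr)\big|_F=0$, i.e.\ $F$ is special Lagrangian of constant phase. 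This identity is valid over the dense set $U$ and extends by continuity to the fibers meeting the toric boundary.

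Finally, to see that $\rho$ is a special Lagrangian torus fibration in the honest sense, I would identify its discriminant. Properness of $\rho$ onto $B$ follows from semi-projectivity of $X$ (properness of the toric moment map onto its polyhedron); the compact group $T_0$ together with the flow of $X_{\tilde g}$ then sweeps out the compact fibers, and an Arnold--Liouville argument shows the smooth fibers are $n$-tori, with sub-dimensional $(n-1)$-tori over $\partial B=\{r=0\}$. A fiber degenerates precisely when it meets $\{\chi^u=0\}=\bigcup_i D_i$, where $T_0$ fails to act freely, which happens along $\Gamma=\rho_0\bigl(\bigcup_i D_i\bigr)\times\{|\epsilon|\}$, a codimension-one subset of the wall $\{r=|\epsilon|\}$. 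Pinning down $\Gamma$ precisely and describing the singular fibers over it is the one genuinely delicate step — it is exactly the analysis carried out by Gross \cite{Gross01} — whereas the pointwise Lagrangian and special computations above are essentially immediate once the normalization of $\Omega$ is fixed.
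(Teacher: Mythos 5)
The paper does not actually prove this proposition --- it is quoted from Goldstein \cite{Goldstein01} and Gross \cite{Gross01} --- so I am assessing your argument on its own merits. Your overall strategy (Lagrangian via the $n$ commuting Hamiltonians $\mu_1,\dots,\mu_{n-1},\tilde g$, specialness via an explicit computation of $\Omega_\epsilon$ on the open orbit, tori via Arnold--Liouville) is the standard one, and the Lagrangian step is correct as written. But there are two concrete problems.

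First, in the specialness computation you assert that $F$ is cut out by fixing $|z_1|,\dots,|z_{n-1}|$ together with $|w-\epsilon|=r$. For a general toric K\"ahler form this is false: the components of $\rho_0$ are $T$-invariant functions of all $n$ moduli $(|z_1|,\dots,|z_{n-1}|,|w|)$, so a level set of $\rho_0$ determines the $|z_j|$ as functions of $|w|$, and $|w|$ varies along $F$ as $\psi=\arg(w-\epsilon)$ varies. Hence $\tfrac{dz_j}{z_j}\big|_F = \tfrac{d|z_j|}{|z_j|}+i\,d\theta_j$ with a nonzero $d\psi$-component, not $i\,d\theta_j$. Your final formula survives only because every correction term is proportional to $d\psi$ and is annihilated by the last wedge factor $i\,d\psi$; this cancellation is exactly why the fibration is special for \emph{every} toric K\"ahler metric, and it should be stated rather than obtained by an incorrect simplification of $F$.

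Second, your identification of the discriminant is wrong. A fiber meeting the open stratum of a single divisor $D_i$ is still a smooth torus, and $T_0$ \emph{does} act freely there: the stabilizer of such a point is the circle generated by $v_i$, which meets $T_0=\ker(u)$ trivially precisely because $\langle u,v_i\rangle=1$. The $T_0$-action fails to be (locally) free, and the fibers degenerate, only along $\bigcup_{i\neq j}D_i\cap D_j$, so the discriminant is $\Gamma=\rho\bigl(\bigcup_{i\neq j}D_i\cap D_j\bigr)$, a real codimension-\emph{two} subset of $B$ sitting inside the wall $\bR^{n-1}\times\{|\epsilon|\}$ --- exactly as the paper states in the paragraph following the proposition. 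Your $\Gamma=\rho_0\bigl(\bigcup_i D_i\bigr)\times\{|\epsilon|\}$ is typically a full-dimensional subset of the wall (already for $\bC^2$ it is the entire wall), so as written your fibration would have ``singular fibers'' over an open set, which contradicts the smoothness one can check directly at points where only one coordinate vanishes.
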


We call $\rho$ the {\em Gross fibration}, which is {\em non-toric} in the sense that its fibers are not invariant under the $T$-action. Its discriminant locus can be explicitly described, namely, a fiber of $\rho$ is singular if and only if either
\begin{itemize}
\item
it intersects nontrivially with (and hence is contained inside) the hypersurface $D_\epsilon \subset X$ defined by $\chi^u = \epsilon$, in which case the fiber is mapped to a point on the boundary $\partial B = \bR^{n-1} \times \{0\}$, or
\item
it contains a point where the $T_0$-action is not free, i.e. when at least two of the homogeneous coordinates on $X$ vanish, in which case the fiber is mapped to the image $\Gamma$ of the codimension 2 subvariety
$$\bigcup_{i\neq j} D_i \cap D_j$$
under $\rho$.
\end{itemize}
We regard $B$ as a (tropical) affine manifold with boundary $\partial B$ and singularities $\Gamma$. Note that $\Gamma$ is a real codimension 2 subset in $B$.

By definition, the {\em wall(s)} in the base of a Lagrangian torus fibration is the loci of smooth fibers which bound nonconstant {\em Maslov index 0} holomorphic disks in $X$.
For the Gross fibration on a toric Calabi-Yau manifold, there is a unique wall given by the hyperplane
$$H := \bR^{n-1} \times \{|\epsilon|\} \subset B,$$
which is parallel to the boundary $\partial B$.
The wall $H$ contains the discriminant locus $\Gamma$ as a {\em tropical hypersurface}, and it divides the base $B$ into two chambers:
\begin{align*}
B_+  := \bR^{n-1} \times (|\epsilon|, +\infty),
B_-  := \bR^{n-1} \times (0,|\epsilon|)
\end{align*}
The Lagrangian torus fibers over $B_+$ and $B_-$ behave differently in a Floer-theoretic sense, and this leads to nontrivial wall-crossing formulas which were used to construct the SYZ mirror for $X$ \cite{Auroux07, Auroux09, AAK12, CLL12, CCLT13}.

For example, the base $B$ of the Gross fibration on $X = K_{\bP^2}$ is an upper half space in $\bR^3$, and the discriminant locus is a graph which is contained in a hyperplane $H$ parallel to the boundary $\partial B$, as described in Figure \ref{fig:KP2_base}.

\begin{figure}
\begin{tikzpicture}
\draw[dashed] (4,3) -- (14,1.5);
\draw[dashed] (4,3) -- (10,5);
\draw[thick] (4.1,3) -- (3.9,3) node[left] {$|\epsilon|$};
\draw (3.9,1.5) node[left] {$B_-$};
\draw (3.9,4.5) node[left] {$B_+$};
\draw[->] (3,0.15) -- (14,-1.5);
\draw[->] (3.1,-0.3) -- (10,2);
\draw[->] (4,-1) -- (4,6);
\draw [thick, red] (6,3.2) -- (9,3.5);
\draw [thick, red] (9,3.5) -- (10.3,3.933);
\draw [thick, red] (9,3.5) -- (10.5,3.275);
\draw [thick, red] (10.3,3.933) -- (10.5,3.275);
\draw [thick, red] (10.3,3.933) -- (10.9,4.7);
\draw [thick, red] (10.5,3.275) -- (12.3,2.2);
\end{tikzpicture}
\caption{The base of the Gross fibration for $X = K_{\bP^2}$}\label{fig:KP2_base}
\end{figure}
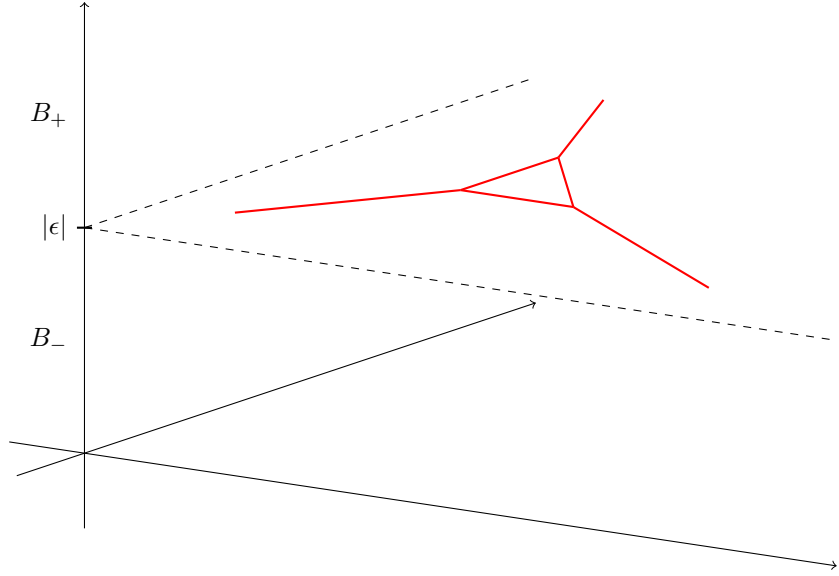

For our purpose, a simple but key observation is that the Gross fibration satisfies all the assumptions in the previous subsection. Indeed, it is not hard to see that $\rho$ is nothing but the fiberwise quotient of the Harvey-Lawson fibration on $\bC^m$ \cite{Harvey-Lawson82}:
\begin{equation}\label{eqn:Harvey-Lawson fibration}
\begin{split}
\tilde{\rho}: \bC^m & \to \tilde{B} := \bR^{m-1} \times \bR_{\geq 0},\\
\left( X_1, \ldots, X_m \right) & \mapsto \left( |X_1|^2 - |X_m|^2, \ldots, |X_{m-1}|^2 - |X_m|^2, |X_1 X_2 \cdots X_m - \epsilon| \right).
\end{split}
\end{equation}

\begin{lem}
The Gross fibration $\rho:X \to B$ is given by the fiberwise quotient of $\tilde{\rho}|_{\mu^{-1}(0)}$ by the $r$-dimensional subtorus $G \subset T^m$.
\end{lem}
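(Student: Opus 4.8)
The plan is to recognize the three coordinate blocks of the Harvey--Lawson fibration $\tilde\rho$ as ($G$-invariant) moment maps and functions, so that $\tilde\rho|_{\mu^{-1}(0)}$ descends to a map on $X$, and then to identify this descent with $\rho = (\rho_0, |\chi^u - \epsilon|)$ using the behaviour of moment maps under symplectic reduction.

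First I would record the relevant linear algebra. Since $\langle u, v_i\rangle = 1$ for every $i$, the vector $u \in M$ is primitive and its image under the inclusion $M \hookrightarrow \bZ^m$ dual to $\bZ^m \to N$, $e_i \mapsto v_i$, is $(1, \dots, 1)$. Hence the character $(1, \dots, 1)\colon T^m \to U(1)$ is trivial on $G$, so the monomial $X_1 \cdots X_m$ is $G$-invariant and descends on $X = \mu^{-1}(0)/G$ to $\prod_i X_i^{\langle u, v_i\rangle} = \chi^u$. Setting $\tilde T_0 := \ker\big((1,\dots,1)\colon T^m \to U(1)\big)$, an $(m-1)$-torus, this says precisely that $G \subset \tilde T_0$, so $T_0 := \tilde T_0 / G$ is the $(n-1)$-dimensional subtorus of $T = T^m/G$ preserving $\chi^u$ that enters the Gross construction.

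Next I would read off the blocks of $\tilde\rho$. In the normalization for which the $T^m$-moment map on $(\bC^m, \omega_0)$ is $X \mapsto (|X_1|^2, \dots, |X_m|^2)$, the first $m-1$ components $\tilde\rho_0 \colon X \mapsto (|X_1|^2 - |X_m|^2, \dots, |X_{m-1}|^2 - |X_m|^2)$ form a moment map for $\tilde T_0$: they are the $T^m$-moment map composed with the projection $(\bR^m)^* \to \tilde{\mathfrak t}_0^*$ dual to $\tilde{\mathfrak t}_0 \hookrightarrow \bR^m$, written in the splitting $(\xi_i)_i \mapsto (\xi_i - \xi_m)_{i < m}$. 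The last component is $|X_1 \cdots X_m - \epsilon|$, i.e.\ the pullback of $|\chi^u - \epsilon|$. Both blocks are $G$-invariant by the previous paragraph, so $\tilde\rho$ is $G$-invariant and $\tilde\rho|_{\mu^{-1}(0)}$ descends to $\bar{\tilde\rho}\colon X \to \tilde B$; moreover each fiber of $\tilde\rho$ is a level set of the $\tilde T_0$-moment map along which $|X_1 \cdots X_m - \epsilon|$ is constant, hence $\tilde T_0$-invariant and in particular $G$-invariant, so $\tilde\rho$ is a $G$-Lagrangian torus fibration.

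The heart of the argument is then to show $\bar{\tilde\rho} = \iota \circ \rho$ for the affine embedding $\iota\colon B \hookrightarrow \tilde B$ which is the identity on the $\bR_{\geq 0}$ factor and the dual $\mathfrak t_0^* \hookrightarrow \tilde{\mathfrak t}_0^*$ of $\tilde{\mathfrak t}_0 \twoheadrightarrow \mathfrak t_0 = \tilde{\mathfrak t}_0/\mathfrak g$ on the other factor (up to the affine translation responsible for the constants $\lambda_i$ in $\ell_i$). The last coordinate matches by construction. For the first, the behaviour of moment maps under symplectic reduction identifies the descent of $\tilde\rho_0$ under $\cdot\,/G$ with the moment map of $\tilde T_0/G = T_0$ acting on $X$, valued in $\mathfrak t_0^* \subset \tilde{\mathfrak t}_0^*$. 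On the other hand, $T_0 = \ker(\chi^u\colon T \to U(1))$ gives $\mathfrak t_0 = \ker(u)$, so $\mathfrak t_0^\perp = \bR u$ inside $\mathfrak t^*$ and the $T_0$-moment map on $X$ equals $\phi$ followed by $\mathfrak t^* \to \mathfrak t^*/\bR u$, which is exactly the projection along $u$ defining $\rho_0$. Matching the two incarnations of $\mathfrak t_0^* = \mathrm{Hom}(\mathfrak t_0, \bR)$ gives $\bar{\tilde\rho} = \iota \circ \rho$, hence $\rho^{-1}(b) = \big(\tilde\rho|_{\mu^{-1}(0)}\big)^{-1}(\iota(b))/G$ for all $b \in B$, which is the asserted fiberwise quotient. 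I do not anticipate a genuine obstacle; the only point requiring care is keeping track of the moment-map levels and of the two presentations of $\mathfrak t_0^*$ so that the affine shifts by the $\lambda_i$ come out correctly.
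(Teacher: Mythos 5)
Your proof is correct and follows essentially the same route as the paper's (much terser) argument: the paper's proof consists precisely of the two observations that the lifted monomial $X_1\cdots X_m$ descends to $\chi^u$ and that the moment-map components of $\tilde\rho$ descend to those of $\rho$ under reduction, with the embedding $B\hookrightarrow\tilde B$ identified via the dualized exact sequence $0\to\mathfrak t^*\to(\bR^m)^*\to\mathfrak g^*\to 0$ exactly as you do. Your write-up simply supplies the details (the $G$-invariance of the character $(1,\dots,1)$ and the identification $\mathfrak t_0=\ker(u)$) that the paper leaves implicit.
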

\begin{proof}
This follows by noting that the preimage of a fiber of the $T$-moment map is a standard torus in $\bC^m$, and that the holomorphic function $\chi^u: X \to \bC$ is lifted to the monomial $X_1 X_2 \cdots X_m: \bC^m \to \bC$.
\end{proof}

The embedding of affine manifolds $B \hookrightarrow \tilde{B}$ can be explicitly seen as follows. Recall that there is an exact sequence
$$ 0 \to \mathfrak{g} \to \bR^m \to \mathfrak{t} \to 0. $$
Dualizing, we have
$$ 0 \to \mathfrak{t}^* \to \left(\bR^m\right)^* \to \mathfrak{g}^* \to 0,$$
where the first map
$$\mathfrak{t}^* \hookrightarrow \left(\bR^m\right)^*$$
is defined by
$$ \mathbf{x} \mapsto \left( \ell_1(\mathbf{x}), \ldots, \ell_m(\mathbf{x}) \right).$$
Let $\mathfrak{t}_0$ be the Lie algebra of the subtorus $T_0 \subset T$ which preserves the holomorphic volume form $\Omega$ on $X$, and let $\mathfrak{R}_0$ denote the Lie algebra of the subtorus $(T^m)_0 \subset T^m$ which preserves the holomorphic volume form $dX_1 \wedge \cdots \wedge dX_m$ on $\bC^m$. Then the above map induces an embedding
$$ \mathfrak{t}_0^* \hookrightarrow \mathfrak{R}_0^* $$
which in turn defines the embedding
\begin{equation}\label{eqn:embedding_base}
B \hookrightarrow \tilde{B}.
\end{equation}

From this we can see that the fiberwise quotient respects the wall and chamber structures, namely, fibers over the wall $H$ (resp. the chambers $B_+$ and $B_-$) in the base of the Gross fibration are exactly quotients by $G$ of fibers over the wall $H$ (resp. the chambers $B_+$ and $B_-$) in the base of the Harvey-Lawson fibration.

For instance, we may consider $X = K_{\bP^1} = \bC^3 \sslash S^1$. The embedding \eqref{eqn:embedding_base} in this example is shown in Figure \ref{fig:C3andKP1_base}.

\begin{figure}
\begin{tikzpicture}
\draw [fill, black!10] (6.1103, 0.7034) -- (6.1103, 6.7034) -- (14, 5.52) -- (14, -0.48) -- (6.1103, 0.7034);
\draw[dashed] (4,3) -- (14,1.5);
\draw[dashed] (4,3) -- (10,5);
\draw[thick] (4.1,3) -- (3.9,3) node[left] {$|\epsilon|$};
\draw (3.9,1.5) node[left] {$B_-$};
\draw (3.9,4.5) node[left] {$B_+$};
\draw[->] (3,0.15) -- (14,-1.5);
\draw[->] (3.1,-0.3) -- (10,2);
\draw[->] (4,-1) -- (4,6);
\draw [thick, red] (6,3.2) -- (8.08,3.408);
\draw [dashed, thick, red] (8.08,3.408) -- (10,3.6);
\draw [dashed, thick, red] (10,3.6) -- (10.9, 4.6);
\draw [dashed, thick, red] (10,3.6) -- (11.0733, 2.959);
\draw [thick, red] (11.0733, 2.959) -- (12.4, 2.1667);
\draw[->] (6.1103, 0.7034) -- (6.1103, 6.7034);
\draw[->] (6.1103, 0.7034) -- (14,-0.48);
\draw [dashed, black!40] (6.1103, 3.7035) -- (13.8, 2.55);
\draw [fill, blue] (8.08,3.408) circle [radius=0.075]; 
\draw [fill, blue] (11.0733,2.959) circle [radius=0.075];
\end{tikzpicture}
\caption{The base of the Gross fibration for $K_{\bP^1}$ contained inside the base of the Harvey-Lawson fibration for $\bC^3$}\label{fig:C3andKP1_base}
\end{figure}
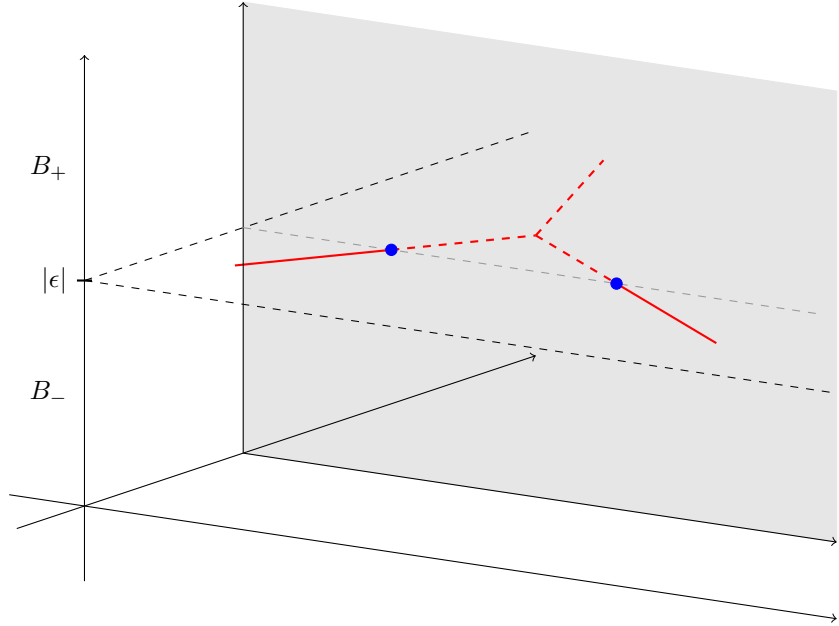

\subsection{Wall-crossing for quasidisk invariants and the SYZ mirror}

Just as in the case of Lagrangian Floer theory \cite{Auroux07, Auroux09, AAK12, CLL12, CCLT13}, when fibers of $\tilde{\rho}$ over different chambers of $B$ are identified by a wall-crossing gluing map, the quasimap Floer superpotentials are identified. This is precisely how we compute the instanton corrections and construct the SYZ mirror.

We consider the number of {\em Maslov index 2} stable quasidisks for the Lagrangian fibers of $\rho$. Geometrically, Maslov index 2 means that the stable quasidisks intersect with the hypersurface $\tilde{D}_\epsilon$, which is defined as the zero set of $X_1X_2\cdots X_m - \epsilon$, in $\bC^m$ at only one point with multiplicity one. As one moves from one chamber to the other by crossing the wall $H$, the number of Maslov index 2 quasidisks bounded by the corresponding Lagrangian torus fiber jumps, exhibiting a {\em wall-crossing phenomenon}.

\begin{lem}
For any Lagrangian torus fiber $\tilde{L}$ over the chambers $B_+$ and $B_-$, every non-constant stable holomorphic disk in $(\tilde{X}, \tilde{L})$ has positive Maslov index and is regular under the standard complex structure $J_0$.
\end{lem}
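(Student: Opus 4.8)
The plan is to reduce the statement to the description of holomorphic quasidisks in $\bC^m$ already recalled in the excerpt (the Blaschke-product classification) together with the fact that the Harvey-Lawson fibration $\tilde\rho$ is obtained from the standard toric data on $\bC^m$. First I would observe that a Lagrangian torus fiber $\tilde L$ of $\tilde\rho$ over an interior point of either chamber $B_+$ or $B_-$ is, up to the $(T^m)_0$-action, a product torus of the form $\tilde L_{\mathbf x}$ for a suitable $\mathbf x$ in the interior of the moment polytope; this is because over $B_\pm$ the function $X_1\cdots X_m - \epsilon$ does not vanish, so the fiber stays away from all coordinate hyperplanes and the constraint $|X_1\cdots X_m - \epsilon| = \text{const} \neq 0$ can be absorbed into the remaining $|X_i|^2$ data. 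Thus the relevant moduli problem is exactly the one for standard product tori in $\bC^m$, to which the cited Cho-Oh / Woodward results apply verbatim.

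Next I would invoke the Blaschke-product classification (the proposition quoted from Woodward): every non-constant stable $J_0$-holomorphic disk with boundary on such a torus decomposes into components, each of which is, in each coordinate $X_i$, a Blaschke product of some degree $d_i \geq 0$, and the Maslov index of such a disk is $2\sum_i d_i$ (each $X_i$-factor contributes $2d_i$, by the Cho-Oh Maslov index formula for the standard complex structure on $\bC^m$). A non-constant disk must have $d_i \geq 1$ for at least one $i$, hence Maslov index at least $2$, which gives positivity; and since every such Blaschke disk is regular (again by the cited proposition, or directly because $\bC^m$ is convex and the bundle pair splits into line bundles of non-negative degree, so $H^1$ of the relevant Cauchy-Riemann operator vanishes), regularity follows for the building blocks, and one checks that stable disks (treed configurations of such) inherit regularity by the standard gluing/transversality argument since there is no sphere bubbling in $\bC^m$ by asphericity.

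The one point requiring a little care is to make sure the argument genuinely covers \emph{all} fibers over $B_+$ and $B_-$, not just a convenient slice: I would spell out the embedding of affine bases $B \hookrightarrow \tilde B$ from \eqref{eqn:embedding_base} to confirm that the fibers of $\tilde\rho$ appearing as lifts of Gross-fibration fibers over the chambers are precisely standard product tori $\tilde L_{\mathbf x}$ with $\mathbf x$ ranging over the appropriate open subsets of $\mathrm{Int}(\Delta)$, and in particular that $\ell_i(\mathbf x) > 0$ for all $i$ there — this is what keeps the tori in the open orbit and makes the Blaschke classification applicable. This is also the step I expect to be the main (if modest) obstacle: the torus fibers of $\tilde\rho$ are \emph{a priori} cut out by the non-toric condition $|X_1\cdots X_m - \epsilon| = c$, so one must verify that, away from the wall $H = \{c = |\epsilon|\}$ where the fiber can hit the discriminant, this locus together with the $\tilde\rho_0$-level sets is $T^m$-equivalent (after the obvious torus reparametrization) to a product torus. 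Once that identification is in place, positivity of the Maslov index and regularity are immediate consequences of the already-cited results, and the lemma follows.
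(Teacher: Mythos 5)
Your treatment of the $B_+$ chamber is essentially the paper's: reduce to standard product tori, invoke the Cho--Oh/Woodward Blaschke-product classification for positivity of the Maslov index and regularity of the basic disks, and handle stable configurations by induction (the paper cites \cite[Corollary 6.2]{Woodward11} and the induction of \cite[Theorem 11.1]{FOOO-toricI} for exactly this). The problem is your uniform claim that fibers over \emph{both} chambers are, up to the torus action, product tori $\tilde{L}_{\mathbf{x}}$. That is false for $B_-$, and the error is exactly at the step you yourself flag as the ``main obstacle'': the constraint $|X_1\cdots X_m-\epsilon|=c$ cannot be absorbed into the $|X_i|^2$ data. The fiber sits over the circle $|w-\epsilon|=c$ in the $w=X_1\cdots X_m$ plane, and whether this circle encloses $w=0$ depends on the sign of $c-|\epsilon|$. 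For $c>|\epsilon|$ (chamber $B_+$) it does, and the fiber is Hamiltonian isotopic to a Clifford-type product torus; for $c<|\epsilon|$ (chamber $B_-$) it does not, and the fiber is Hamiltonian isotopic to a \emph{Chekanov-type} torus, which is not Lagrangian isotopic to a product torus through tori avoiding Maslov index $\leq 0$ disks. Note also that the $(T^m)_0$-action preserves $X_1\cdots X_m$, so it can never recenter this circle at the origin. If your identification were correct, fibers over $B_+$ and $B_-$ would bound the same disks and there would be no wall-crossing at $H$ --- which would contradict the entire construction the lemma is feeding into.

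Consequently the Blaschke-product classification simply does not apply over $B_-$: a Chekanov-type fiber bounds a unique family of Maslov index $2$ holomorphic disks rather than $m$ of them, and the classification needed there is Auroux's \cite[Example 3.3.1]{Auroux09} (or \cite[Lemma 4.31]{CLL12}), after which one runs the same induction for regularity of stable disks. The conclusion of the lemma (positivity of the Maslov index and regularity) is still true over $B_-$, but it requires this separate argument; your proposal as written proves the statement only for the $B_+$ chamber.
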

\begin{proof}
Fibers over $B_+$ are Hamiltonian isotopic to moment map fibers which are standard tori in $\bC^m$, so this follows from \cite[Corollary 6.2]{Woodward11} which in turn follows from Cho-Oh's classification results \cite{Cho-Oh06} and an induction argument as in \cite[Theorem 11.1]{FOOO-toricI} (see also \cite[Corollary 6.2]{Woodward11}).

Fibers over $B_-$ are Hamiltonian isotopic to the Chekanov tori \cite{Auroux09} in $\bC^m$, so we can apply the classification in \cite{Auroux09} or \cite[Lemma 4.31]{CLL12} which says that there is a unique quasidisk, and then apply induction again for proving the regularity for stable disks, just as in \cite[Corollary 6.2]{Woodward11}.
\end{proof}

\begin{prop}
The central charge $\mathfrak{m}_0(1)$ of the quasimap $A_\infty$ algebra is given by
\begin{align*}
\mathfrak{m}^b_0(1) =
\left\{\begin{array}{cc}
\sum_{i=1}^m \check{C}_i v^{-1}z^{w_i} & \text{if $L$ is over $B_+$,}\\
u & \text{if $L$ is over $B_-$.}
\end{array}
\right.
\end{align*}
\end{prop}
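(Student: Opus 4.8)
\emph{Proof proposal.} The plan is to reduce everything to Proposition~\ref{prop:weakly_unobstr}. By the preceding lemma, for a fiber $\tilde L$ of $\tilde\rho$ over either chamber every nonconstant stable $J_0$-holomorphic disk in $(\tilde X,\tilde L)$ is regular and has positive Maslov index, so $QA_\infty(\tilde L,b)$ is weakly unobstructed and
$$\mathfrak m^b_0(1)=\Big(\ \sum_{[u]\,:\,I(u)=2}q^{-\int_D u^*\tilde\omega}\,e^{2\pi\langle b,\partial u\rangle}\ \Big)\,\mathbf 1_L .$$
Thus the statement is equivalent to (i) enumerating the Maslov index $2$ quasidisks --- the $J_0$-holomorphic disks in $\bC^m$ with boundary on $\tilde L$ meeting $\tilde D_\epsilon$ exactly once --- and (ii) computing their symplectic areas and boundary homology classes and recognising the resulting monomials. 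Since quasidisks are honest holomorphic disks in $\bC^m$ with no sphere bubbling, this enumeration is the same one underlying the (lifted) open Gromov--Witten computation of \cite{CLL12}; one may either invoke that computation or, as sketched below, carry it out directly.

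\emph{Over $B_+$.} Here $\tilde L$ is Hamiltonian isotopic to a standard product torus in $\bC^m$, so the Blaschke classification quoted above (from \cite{Woodward11}, \cite{Cho-Oh06}) applies: the Maslov index $2$ quasidisks form $m$ one-parameter families $\phi_1,\dots,\phi_m$, where $\phi_i$ is the degree-one Blaschke map in the $i$-th coordinate, constant in the others; it meets $\tilde D_\epsilon=\{X_1\cdots X_m=\epsilon\}$ once exactly because $\tilde L$ lies over $B_+$. I would then compute two things. First, the boundary class $\partial\phi_i\in H_1(\tilde L;\bZ)\cong\bZ^m$ is the $i$-th standard generator; under the map to $H_1(L;\bZ)\cong N$ induced by $\bR^m\twoheadrightarrow\mathfrak t$ (the dual of $0\to\mathfrak g\to\bR^m\to\mathfrak t\to0$) it maps to $v_i=(w_i,1)$, so for a brane structure $b\in H^1(L;\bR)/H^1(L;\bZ)$ one has $e^{2\pi\langle b,\partial\phi_i\rangle}=z^{w_i}\cdot(\text{a factor depending only on the last coordinate of }v_i)$, the latter common to all~$i$. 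Second, $\int_D\phi_i^*\tilde\omega$ is an affine function of Duistermaat's action coordinates on $B_+$, and these decompose as: the $T_0$-moment directions $\rho_0$ (contributing the $z^{w_i}$-part), the wall-transverse direction $|\chi^u-\epsilon|$ (contributing a common factor), and the K\"ahler parameters of $X$ (contributing a constant). Multiplying, $q^{-\int_D\phi_i^*\tilde\omega}\,e^{2\pi\langle b,\partial\phi_i\rangle}=\check C_i\,v^{-1}z^{w_i}$, where by definition of the semi-flat coordinates on $\check X_0$ the common factor is the monomial $v^{-1}$, and the constant is $\check C_i$; the identification $\check C_i=q^{-(\text{area of }\phi_i|_{\,\rho_0=0})}$ is consistent with $t_a=\prod_i\check C_i^{D_i\cdot\gamma_a}$ because each $\gamma_a\in H_2(X;\bZ)$ is the relation $\sum_i(D_i\cdot\gamma_a)\,[\phi_i]$ among disk classes and $\check C_m=1$ fixes the remaining normalisation. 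Summing over $i$ gives $\mathfrak m^b_0(1)=\sum_{i=1}^m\check C_i\,v^{-1}z^{w_i}$.

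\emph{Over $B_-$.} Now $\tilde L$ is Hamiltonian isotopic to a Chekanov-type torus in $\bC^m$, and by \cite{Auroux09}, \cite[Lemma~4.31]{CLL12} it bounds a \emph{unique} Maslov index $2$ quasidisk family, meeting $\tilde D_\epsilon$ once; its boundary class generates the ``extra'' summand of $H_1(\tilde L)$ transverse to the $T_0$-orbits. The same area--holonomy bookkeeping identifies its contribution $q^{-\int_D u^*\tilde\omega}\,e^{2\pi\langle b,\partial u\rangle}$ with the semi-flat coordinate $u$ on the $B_-$-chart (its area pinned down by the normalisation $\check C_m=1$), so $\mathfrak m^b_0(1)=u$. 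One then checks that the $u,v,z_j$ arising this way are precisely the coordinates of \eqref{eqn:toricCY_mirror}, for which $uv$ is the lift $X_1\cdots X_m$ of $\chi^u$; in particular $v^{-1}\sum_i\check C_i z^{w_i}=u$, so the two chamber formulas agree --- which is exactly what makes the wall-crossing gluing work in the next step.

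\emph{Main obstacle.} The disk \emph{classification} in both chambers is handed to us by the preceding lemma, so the real content is step (ii): getting the affine-geometric dictionary exactly right for the \emph{non-toric} Gross fibration, in particular correctly extracting the contribution of the $|\chi^u-\epsilon|$ direction --- this is what produces the common factor $v^{-1}$ over $B_+$ and the single coordinate $u$ over $B_-$, and it has no analogue in the purely toric superpotential of \cite{Woodward11}. Keeping the factors of $2\pi$, the identification $H_1(L)\cong N$, and the normalisation $\check C_m=1$ coherent throughout is the delicate part; the safest check is that, since these are literally the holomorphic disks counted in \cite{CLL12}, the outcome must reproduce the generating functions found there.
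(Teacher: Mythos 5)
Your proposal is correct and follows essentially the same route as the paper: over $B_+$ it invokes the Cho--Oh/Blaschke classification and area formula for the Clifford-type lifts to identify each term with $\check C_i v^{-1}z^{w_i}$, and over $B_-$ it invokes the Auroux/\cite[Lemma 4.31]{CLL12} uniqueness of the disk bounded by the Chekanov-type lift to get $u$. The paper's proof is just a terser version of the same argument; your extra bookkeeping of areas, boundary classes and the semi-flat coordinate dictionary fills in details the paper leaves implicit.
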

\begin{proof}
The lifts of fibers over $B_+$ of Clifford type and they are Hamiltonian isotopic to standard tori in $\bC^m$, so they bound as many disks as a standard torus in $\bC^m$. In this case, the formula follows from the classification results of Cho-Oh \cite[Theorem 5.2]{Cho-Oh06} and their area formula \cite[Theorem 8.1]{Cho-Oh06} which gives $e^{2\pi \langle b, v_i\rangle} q^{-\ell_i(\mathbf{x})} = \check{C}_i v^{-1}z^{w_i}$ for $i = 1, \ldots, m$.
For fibers over $B_-$, their lifts are of Chekanov type, so they bound only one (family of) disks as shown in \cite[Example 3.3.1]{Auroux09} or \cite[Lemma 4.31]{CLL12}.
\end{proof}

The resulting {\em wall-crossing formula}:
$$ u = v^{-1} \sum_{i=1}^m \check{C}_i z^{w_i}$$
is exactly what we need in order to get the correct mirror. More precisely, we modify the gluing between the complex charts $\check{X}_+ = TB_+/\Lambda$ and $\check{X}_- = TB_-/\Lambda$ using the wall-crossing formula. This cancels the nontrivial monodromy of the complex structure around the discriminant locus $\Gamma \subset B$ and produces the following quasimap SYZ mirror:

\begin{thm}
The quasimap SYZ mirror for the toric Calabi-Yau manifold $X$ is given by the family of affine hypersurfaces
\begin{equation}\label{eqn:SYZmirror_toricCY}
\check{X}_q = \left\{(u,v,z_1,\ldots,z_{n-1}) \in \bC^2 \times (\bC^\times)^{n-1} \mid uv = \sum_{i=1}^{m} \check{C}_i z^{w_i} \right\},
\end{equation}
where the coefficients $\check{C}_i \in \bC$ are constants (with $\check{C}_m = 1$) subject to the constraints
$$q_a = \prod_{i=1}^m C_i^{D_i\cdot \gamma_a},\quad a=1,\ldots,r;$$
here $z^w$ denotes the monomial $z_1^{w^1}\ldots z_{n-1}^{w^{n-1}}$ for $w = (w^1,\ldots,w^{n-1})\in\bZ^{n-1}$,
$q^d$ denotes $\exp\left(-\int_d \omega_\bC\right)$ which can be expressed in terms of the complexified K\"ahler parameters $q_1, \ldots, q_r$, and $\beta_1, \ldots, \beta_m \in \pi_2(X,L)$ are the basic disk classes as before.
\end{thm}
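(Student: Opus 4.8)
The plan is to run the three-step quasimap SYZ recipe of Section 3.2 explicitly for the Gross fibration $\rho: X \to B = \bR^{n-1}\times\bR_{\geq 0}$ and to read off the corrected gluing of complex charts from the wall-crossing formula $u = v^{-1}\sum_{i=1}^m \check{C}_i z^{w_i}$ established in the preceding Proposition. First I would set up the semi-flat picture. Over the smooth locus $B_0 = \mathrm{Int}(B)\setminus H$ the fibration $\rho$ restricts to a Lagrangian torus bundle, so Duistermaat's action-angle coordinates identify $X_0 = \rho^{-1}(B_0)$ with $T^*B_0/\Lambda^\vee$ and hence the semi-flat mirror with $\check{X}_0 = TB_0/\Lambda$. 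Since the wall $H = \bR^{n-1}\times\{|\epsilon|\}$ cuts $B$ into the chambers $B_+$ and $B_-$, this yields two complex charts $\check{X}_\pm := TB_\pm/\Lambda$, each carrying the semi-flat complex structure $\check{J}_0$. Using the explicit affine coordinates on the base of the Gross fibration (the first $n-1$ coming from the moment map $\rho_0$ and parallel to $H$, the last being $|\chi^u - \epsilon|$), I would write down the corresponding semi-flat holomorphic coordinates: the $n-1$ "horizontal" ones assemble into $z_1,\dots,z_{n-1}\in\bC^\times$ on both charts, while the "vertical" coordinate is $v$ on $\check{X}_+$ and $u$ on $\check{X}_-$. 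One then checks that the naive identification of $\check{X}_+$ with $\check{X}_-$ across $H$ has nontrivial monodromy around $\Gamma\subset H$, inherited from the monodromy of the integral affine structure on $B_0$; this is precisely why the semi-flat mirror is incorrect.

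Next I would invoke the key input. Fibers over $B_+$ lift to tori Hamiltonian isotopic to standard (Clifford-type) tori in $\bC^m$ and those over $B_-$ to tori of Chekanov type, and by the Proposition above the central charge $\mathfrak{m}_0^b(1)$ of the quasimap $A_\infty$ algebra equals $\sum_{i=1}^m \check{C}_i v^{-1} z^{w_i}$ in the first chamber and equals $u$ in the second. Since the wall-crossing gluing must identify the two quasimap Floer superpotentials, we obtain $u = v^{-1}\sum_{i=1}^m \check{C}_i z^{w_i}$, equivalently $uv = \sum_{i=1}^m \check{C}_i z^{w_i}$. I would then argue that gluing $\check{X}_+$ to $\check{X}_-$ by this transformation while keeping $z_1,\dots,z_{n-1}$ produces exactly the affine hypersurface $\check{X}_q\subset\bC^2\times(\bC^\times)^{n-1}$ of \eqref{eqn:SYZmirror_toricCY}: $\check{X}_+$ is identified with the locus $\{v\neq 0\}$ and $\check{X}_-$ with $\{u\neq 0\}$, the transition is biholomorphic over the overlap $\{uv\neq 0\}$, and adjoining the divisors $\{v = 0\}$ and $\{u = 0\}$ corresponds to the admissible partial compactification along the fibers lying over $\partial B$ and over the wall $H$. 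A short monodromy computation then confirms that this modified gluing cancels the monodromy found above, so that $\check{J}_0$ extends across $\Gamma$.

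Finally I would pin down the parameters. By the area formula of Cho--Oh quoted above, the coefficients are $\check{C}_i = q^{\beta_i} e^{2\pi\langle b, v_i\rangle}$ once $z^{w_i}$ is identified with the appropriate holonomy factors, with $\check{C}_m = 1$ after normalizing the basic disk class $\beta_m$; here $q^d = \exp(-\int_d \omega_\bC)$. The linear relations $\sum_{i=1}^m (D_i\cdot\gamma_a)\,\beta_i$ among the basic disk classes $\beta_1,\dots,\beta_m \in \pi_2(X,L)$, one for each generating curve class $\gamma_a$ ($a = 1,\dots,r$), then translate into the constraints $q_a = \prod_{i=1}^m \check{C}_i^{D_i\cdot\gamma_a}$, which are exactly the relations defining the complex moduli of \eqref{eqn:SYZmirror_toricCY}. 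Matching these with the defining data of the physical mirror \eqref{eqn:toricCY_mirror} then gives the "as expected" coincidence claimed in the abstract.

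I expect the main obstacle to be the second step: not the computation of the superpotentials, which is handed to us by Woodward's theory together with Cho--Oh's Blaschke classification (no sphere bubbling, no Kuranishi technicalities), but the conceptual point that the wall-crossing transformation of the \emph{superpotential} determines the gluing of the \emph{complex charts}. Making this precise requires interpreting the semi-flat coordinate $v$ (resp.\ $u$) as the Floer-theoretic weight of the relevant basic disk class, so that its transformation law across $H$ is forced by the jump in the set of Maslov-index-$2$ quasidisks, and then verifying that this single wall-crossing is the \emph{only} correction needed. Here one leans again on the explicit classification, which shows the Maslov-index-$2$ quasidisks are exactly the basic ones together with those obtained by adding the curve classes $\gamma_a$, so that the corrected gluing is genuinely given by the closed formula $uv = \sum_{i=1}^m \check{C}_i z^{w_i}$ and nothing further.
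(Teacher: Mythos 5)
Your proposal follows essentially the same route as the paper: the paper's own ``proof'' consists of the paragraph preceding the theorem, which takes the wall-crossing formula $u = v^{-1}\sum_{i=1}^m \check{C}_i z^{w_i}$ from the central-charge Proposition and uses it to reglue the semi-flat charts $\check{X}_+ = TB_+/\Lambda$ and $\check{X}_- = TB_-/\Lambda$, and you are spelling out those same steps (semi-flat setup, chamber structure, Clifford/Chekanov dichotomy, Cho--Oh area formula for the $\check{C}_i$, and the constraints $q_a = \prod_i \check{C}_i^{D_i\cdot\gamma_a}$) in more detail. One small correction to your closing remark: for quasidisks in $\tilde{X}=\bC^m$ there is no sphere bubbling, so the Maslov-index-$2$ quasidisks are \emph{exactly} the $m$ basic Blaschke disks --- not the basic ones plus curve-class corrections --- and this absence of corrections is precisely why the quasimap SYZ mirror coincides with the physical mirror with trivial mirror map.
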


Notice that the quasimap SYZ mirror family \eqref{eqn:SYZmirror_toricCY} is entirely written in terms of symplectic-geometric information such as complexified K\"ahler parameters and quasidisk invariants of $X$, and it coincides with the mirror \eqref{eqn:toricCY_mirror} predicted by physical arguments \cite{Leung-Vafa98, CKYZ99, HIV00}.

\begin{remark}
Strictly speaking, this is the SYZ mirror for the complement of a hypersurface $D_\epsilon$ (zero set of the function $\chi^u - \epsilon$) in $X$ only; the SYZ mirror of $X$ itself should be given by the Landau-Ginzburg model $(\check{X}, W)$ where the superpotential is the function $W := u$.
\end{remark}

\begin{remark}
As in \cite{Woodward11, Wilson-Woodward13}, all the computations of quasidisk invariants and constructions above can be generalized (in a straightforward way) to toric Calabi-Yau orbifolds.
\end{remark}

On the other hand, the SYZ mirror for $\bC^m$ with respect to the Harvey-Lawson fibration \eqref{eqn:Harvey-Lawson fibration} is given by
\begin{equation*}
(\bC^m)^\vee = \left\{ (u,v,Z_1,\ldots,Z_{m-1}) \in \bC^2 \times (\bC^\times)^{m-1} \mid uv = 1 + Z_1 + \cdots + Z_{m-1} \right\}.
\end{equation*}
Notice that $\check{X}_q$ embeds into $(\bC^m)^\vee$ by
$$ u = u, v = v, Z_i = \check{C}_i z^{w_i} \text{ for $i = 1, \ldots, m-1$} $$
via the embedding \eqref{eqn:embedding_base}; this is mirror to the fact that $X$ is a quotient of $\bC^m$, very much like the case of compact toric manifolds as shown in \cite{Chan-Leung10a, Chan-Leung10b}.

\begin{example}\label{eg:SYZ_KP2}
The SYZ mirror of $X = K_{\bP^2}$ is given by
\begin{equation}\label{eqn:mirror_KP2}
\check{X} = \left\{(u,v,z_1,z_2)\in\bC^2\times(\bC^\times)^2 \mid uv = 1 + z_1 + z_2 + \frac{q}{z_1z_2} \right\},
\end{equation}
where $q$ is the K\"ahler parameter which measures the symplectic area of a projective line contained inside the zero section of $K_{\bP^2}$ over $\bP^2$. It embeds into the SYZ mirror of $\bC^4$:
\begin{equation*}
(\bC^4)^\vee = \left\{ (u,v,Z_1,\ldots,Z_3) \in \bC^2 \times (\bC^\times)^3 \mid uv = 1 + Z_1 + Z_2 + Z_3 \right\}
\end{equation*}
as the hypersurface defined by $Z_1 Z_2 Z_3 = q$.
\end{example}

\section{Discussions}

\subsection{}

To deform the semi-flat complex structure $\check{J}_0$ on the mirror $\check{X}_0$ so that it can be extended across the singular fibers, we only need to cancel its nontrivial monodromy around the discriminant locus $\Gamma$. But this condition is {\em not} sufficient to uniquely determine the genus 0 open Gromov-Witten invariants; indeed we are exploiting this flexibility in order to use quasidisk invariants instead of open Gromov-Witten invariants to implement the SYZ mirror construction.

If one imposes further the normalization condition proposed by Gross-Siebert in their program \cite{Gross-Siebert-reconstruction}, which is equivalently to asking that the mirror be written in canonical coordinates \cite{Ruddat-Siebert14}, then the instanton corrections are uniquely determined and can be shown to be precisely given by genus 0 open Gromov-Witten invariants \cite{Lau14}. This leads to the following question: could one give a geometric explanation for why the open Gromov-Witten potentials satisfy the normalization condition?

Fukaya-Oh-Ohta-Ono \cite{FOOO-toricIII} was able to obtain a Frobenius manifold structure on the total cohomology of a compact toric manifold using bulk-deformed genus 0 open Gromov-Witten invariants, and they proved that this is isomorphic to the B-model Frobenius manifold coming from Saito's theory of singularities \cite{Saito83}. It is natural to ask if the Frobenius structure on the total cohomology is more or less unique. If this is the case, then an analogous construction in the case of toric Calabi-Yau manifolds would give an answer to the above question.

\subsection{}

In \cite{CLT11, CCLT13}, it was proved that the so-called {\em SYZ map}, which is defined in terms of generating functions of genus 0 open Gromov-Witten invariants, coincides with the inverse of the toric mirror map for any semi-projective toric Calabi-Yau manifold. The analogue of the SYZ map in the quasimap setting would just be the identity map $t(q) = q$. This is expected because the quasimap SYZ mirror is identical to the physical mirror, which differs from the usual SYZ mirror by a mirror map. We believe that there is a family of SYZ mirrors (and hence a family of SYZ maps) interpolating between the quasimap SYZ mirror and Lagrangian Floer SYZ mirror, which can be described as follows.

What we need is an open-string version of the (genus 0) moduli spaces constructed by Venugopalan in \cite{Venugopalan13}. Her theory is a symplectic version of the theory of stable quasimaps due to Ciocan-Fontanine, Kim and Maulik \cite{Ciocan-Fontanine-Kim-Maulik14}. More precisely, Venugopalan considers the space of finite energy vortices defined on Riemann surfaces obtained from nodal curves with infinite cylinders in the places of nodal and marked points. She showed that this space can be compactified by stable vortices which incorporate both breaking of cylinders and sphere bubbling in the fibers, and she proved that the compactified space is homeomorphic to the corresponding moduli space of stable quasimaps defined in \cite{Ciocan-Fontanine-Kim-Maulik14}.

It is natural to expect that the SYZ construction can still be implemented using an open-string analogue of Venugopalan's invariants (in genus 0) in place of Fukaya-Oh-Ohta-Ono's genus 0 open Gromov-Witten invariants. This would produce a family of SYZ mirrors $\check{X}_\epsilon$ and define an $\epsilon$-SYZ map.

\begin{conj}
The $\epsilon$-SYZ map coincides with the inverse of the $\epsilon$-mirror map.
\end{conj}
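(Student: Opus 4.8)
The plan is to mimic the proof in \cite{CLT11, CCLT13} that the (Lagrangian-Floer) SYZ map inverts the toric mirror map, but now working with an $\epsilon$-dependent family of invariants that interpolates between Woodward's quasidisk counts ($\epsilon = 0$) and Fukaya-Oh-Ohta-Ono's open Gromov-Witten invariants ($\epsilon \to \infty$). The first step is analytic: build, for each interior point $\mathbf{x}$ of the base of the Gross fibration, the genus $0$ moduli space of finite-energy \emph{holomorphic treed vortex quasidisks} of area parameter $\epsilon$ on $(\bC^m, \tilde L_\mathbf{x})$ --- the open-string analogue of Venugopalan's spaces \cite{Venugopalan13}, which are the symplectic incarnation of the stable quasimaps of Ciocan-Fontanine-Kim-Maulik \cite{Ciocan-Fontanine-Kim-Maulik14}. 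One must prove a Gromov-type compactness theorem allowing breaking of cylinders, disk and sphere bubbling in the fibers, and affine (plane-vortex) degenerations in the sense of Ziltener \cite{Ziltener14}, and then equip the compactification with a Kuranishi structure and coherent orientations, so that the virtual counts $n^\epsilon_\beta(L_\mathbf{x})$ of Maslov index $2$ configurations through a generic point are defined and deformation-invariant in $\epsilon$ off a discrete set, with $n^0_\beta$ equal to the quasidisk numbers of Proposition \ref{prop:weakly_unobstr} and the adiabatic limit $\epsilon \to \infty$ equal to the FOOO numbers (the open analogue of the adiabatic-limit theorems of Gaio-Salamon \cite{Gaio-Salamon05} and Ziltener \cite{Ziltener14}).

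With these invariants in hand, the second step is to run the SYZ construction exactly as in Section 3. The Gross fibration has a single wall $H$ parallel to $\partial B$, whose location is independent of the vortex-area parameter, so the wall-crossing picture is structurally unchanged: over $B_-$ the $G$-Lagrangian is of Chekanov type and bounds one basic vortex quasidisk, contributing the term $u$, while over $B_+$ it is of Clifford type and contributes $\sum_{i=1}^m \big(1 + \delta_i^\epsilon(\check q)\big)\,\check C_i\, v^{-1} z^{w_i}$, where $\delta_i^\epsilon$ is the generating function of vortex quasidisks with one extra interior marked point constrained to the hypersurface $\tilde D_\epsilon$. Matching the two quasimap Floer superpotentials across $H$ yields the $\epsilon$-corrected gluing of the semi-flat charts and hence the family $\check X_\epsilon$, and the $\epsilon$-SYZ map is the coordinate change $q \mapsto \check q = \check q(q;\epsilon)$ read off from the $\delta_i^\epsilon$; by the first step it interpolates between the identity at $\epsilon = 0$ and the inverse toric mirror map at $\epsilon \to \infty$.

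The third and decisive step is to identify the $\epsilon$-SYZ map with the inverse of the $\epsilon$-mirror map. Here the $\epsilon$-mirror map should be \emph{defined} intrinsically on the B-side: either as the change of coordinates bringing the period integrals of $\check\Omega_t$ over $\check X_\epsilon$ into canonical coordinates in the sense of Gross-Siebert and Ruddat-Siebert \cite{Gross-Siebert-reconstruction, Ruddat-Siebert14}, or equivalently via the distinguished solutions of the associated $\epsilon$-deformed GKZ / Picard-Fuchs system, normalized so that it degenerates to the identity as $\epsilon \to 0$; this is the B-model counterpart of the $\epsilon$-dependent quantum Kirwan map. One then shows, following \cite{CLT11}, that the generating functions $1 + \delta_i^\epsilon$ satisfy the same $\epsilon$-family of hypergeometric-type differential relations as the mirror-map coordinates, reducing the conjecture to the uniqueness of a power-series solution with prescribed leading term. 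I expect the decisive obstacle to be the analytic foundation of the first step: the adiabatic degeneration of finite-energy vortices \emph{with Lagrangian boundary} has not been carried out, and one needs enough regularity there both to justify $\epsilon$-invariance and to make the explicit Cho-Oh classification and Blaschke-product computations survive the deformation; once the invariants exist and the open mirror theorems of \cite{CLL12, CCLT13} are available, the algebraic comparison in the third step should follow by the same argument as in the already-known cases $\epsilon \in \{0, \infty\}$.
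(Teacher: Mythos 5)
This statement is a conjecture in the Discussions section; the paper offers no proof of it, only the heuristic that the two endpoints $\epsilon = 0$ (where the $\epsilon$-SYZ map is the identity and $J_{\text{sm}}^0 = I$) and $\epsilon \to \infty$ (where the claim reduces to the open mirror theorems of \cite{CLT11, CCLT13}) are understood. Your proposal is accordingly structured as a research program rather than a proof, and you are candid that the decisive first step --- constructing genus $0$ open-string quasimap invariants with Lagrangian boundary, with compactness, virtual fundamental chains, and control of the $\epsilon$-degenerations --- does not yet exist. That admission is accurate, and it means the proposal does not close the conjecture; it elaborates, in more detail, the expectation the paper itself records.

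Two points where your outline conflicts with the paper's intent deserve flagging. First, you describe the moduli spaces as vortices ``of area parameter $\epsilon$'' and invoke the adiabatic-limit theorems of Gaio--Salamon and Ziltener; but the paper's closing Remark explicitly warns that the symplectic-vortex (gauged Gromov--Witten) theory exhibits \emph{no} wall-crossing as the area parameter varies --- its $\epsilon$-dependence is entirely absorbed by the quantum Kirwan map --- so an interpolation built on the area parameter would be constant and could not connect the two distinct SYZ mirrors. The relevant $\epsilon$ is the \emph{stability parameter} of Ciocan-Fontanine--Kim--Maulik \cite{Ciocan-Fontanine-Kim-Maulik14}, for which the invariants genuinely jump at discrete values; conflating the two parameters would make your Step 1 produce the wrong (trivial) family. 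Second, you propose to \emph{define} the $\epsilon$-mirror map on the B-side via periods, canonical coordinates, or an $\epsilon$-deformed GKZ system, whereas the paper defines it on the A-side as the $1/z$-coefficient of the $H^2(X)$-part of $I - J_{\text{sm}}^\epsilon$ using the small $\epsilon$-J-function of \cite{Ciocan-Fontanine-Kim14}. With your definition the conjecture changes content: identifying the two definitions is itself an unproven $\epsilon$-family of closed-string mirror theorems, so your Step 3 is either circular or requires that identification as an additional input. As it stands, the proposal is a plausible roadmap consistent with the paper's expectations, but it neither fills the foundational analytic gap nor proves the statement.
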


Here, the $\epsilon$-mirror map should be given by the $1/z$-coefficient of the $H^2(X)$-part of the function $I - J_{\text{sm}}^\epsilon$, where $J_{\text{sm}}^\epsilon$ is the small $\epsilon$-J-function defined in \cite{Ciocan-Fontanine-Kim14} using the moduli space of genus 0 stable quasimaps. In general, $J_{\text{sm}}^\epsilon$ is a truncation of the classical $I$-function. When $\epsilon = 0$, $J_{\text{sm}}^0 = I$ is the $I$-function itself so that the $\epsilon$-mirror map is nothing but the identity map. When $\epsilon \to \infty$, the $\epsilon$-mirror map is the usual mirror map, and the above conjecture reduces to the open mirror theorem established in \cite{CLT11, CCLT13}.

\begin{remark}
Note that we are not proposing to use the open-string analogue of gauged Gromov-Witten theory, i.e. counting of solutions of the symplectic vortex equations \cite{CGMS02, Gaio-Salamon05} (for a survey on this theory and its applications, see e.g. \cite{Gonzalez16}) because there is no wall-crossing in gauged Gromov-Witten theory as the parameter $\epsilon$ moves and all the information is captured by the quantum Kirwan map \cite{Ziltener14, Woodward_quantumKirwan_I, Woodward_quantumKirwan_II, Woodward_quantumKirwan_III}. In contrast, there is nontrivial wall-crossing phenomenon in the quasimap theory of Ciocan-Fontanine, Kim and Maulik \cite{Ciocan-Fontanine-Kim-Maulik14} as $\epsilon$ moves, and this is what we need if we want to have a nontrivial interpolation between the two extreme SYZ mirrors. In fact it is not known how the above two theories are related to each other.
\end{remark}

\section*{Acknowledgment}
I would like to thank Bumsig Kim for asking what the quasimap analogue of the SYZ mirror would be like. Thanks are also due to Eduardo Gonz\'alez, Ziming Nikolas Ma, Chris Woodward and Guangbo Xu for useful discussions and comments.

This is a write-up of the author's invited talk at the 7th International Congress of Chinese Mathematicians (ICCM) held in August 2016 in Beijing which was jointly hosted by the Academy of Mathematics and Systems Science (AMSS) and the Morningside Center of Mathematics (MCM) of the Chinese Academy of Sciences (CAS). I am grateful to the hosting institutions and organizers for invitation and hospitality.

This research was substantially supported by grants from the Research Grants Council of the Hong Kong Special Administrative Region, China (Project No. CUHK14300314, CUHK14302015 $\&$ CUHK14314516).

\bibliographystyle{amsplain}
\bibliography{geometry}

\end{document}